\numberwithin{equation}{section}
\theoremstyle{definition}
 \newtheorem{theorem}{Theorem}[section]
 \crefname{theorem}{Theorem}{Theorems}
 \newtheorem{proposition}[theorem]{Proposition}
 \crefname{proposition}{Proposition}{Propositions}
 \newtheorem{lemma}[theorem]{Lemma}
 \crefname{lemma}{Lemma}{Lemmas}
 \newtheorem{corollary}[theorem]{Corollary}
 \crefname{corollary}{Corollary}{Corollaries}
 \crefname{conjecture}{Conjecture}{Conjectures}
 \crefname{question}{Question}{Questions}
 \crefname{problem}{Problem}{Problems}
 \newtheorem{remark}[theorem]{Remark}
 \crefname{remark}{Remark}{Remarks}
\theoremstyle{definition} 
 \newtheorem{definition}[theorem]{Definition}
 \crefname{definition}{Definition}{Definitions}
 \newtheorem{example}[theorem]{Example}
 \crefname{example}{Example}{Examples}
 \crefname{caution}{Caution}{Cautions}
 \crefname{equation}{formula}{formulas}
\newcommand{\N}{\text{Nr}}
\newcommand{\syl}{\textup{Syl}}
\newcommand{\C}{\mathbb{C}}
\newcommand{\F}{\mathbb{F}}
\newcommand{\Q}{\mathbb{Q}}
\newcommand{\Z}{\mathbb{Z}}
\newcommand{\sym}{\mathfrak{S}}
\newcommand{\e}{\epsilon}
\newcommand{\s}{\sigma}
\newcommand{\sgn}{\textup{sgn}}
\DeclareMathOperator{\Gal}{Gal} 
\DeclareMathOperator{\ord}{ord} 
\DeclareMathOperator{\rank}{rank} 
\DeclareMathOperator{\Res}{Res} 
\title{The $p$-adic limits of iterated $p$-power cyclic resultants of multivariable polynomials{\footnote{\today, \number\time}}}
\author{Hyuga Yoshizaki} 
\email{yoshizaki.hyuga@gmail.com}
\address{Department of Mathematics, Faculty of Science and Technology, Tokyo University of Science; 2641 Yamazaki, Noda-shi, Chiba, Japan}
\dedicatory{To the memory of Valon}
\subjclass[2020]{
    13P15,  	
	11C08,  	
    11R09,   
    57K10,  	
    57K31  	
	}
\keywords{
    $p$-adic limit, 
    Resultant,
    Iwasawa class number formula,
    $\Z_p^d$-extension,
    Branched covering
    }
\begin{document}

\maketitle

\begin{abstract}
    Let $p$ be a prime number.
    The $p$-power cyclic resultant of a polynomial is the determinant of the Sylvester matrix of $t^{p^n}-1$ and the polynomial.
    It is known that the sequence of $p$-power cyclic resultants and its non-$p$-parts converge in $\Z_p$.
    This article shows the $p$-adic convergence of the iterated $p$-power cyclic resultants of multivariable polynomials.
    As an application, we show the $p$-adic convergence of the torsion numbers of $\Z_p^d$-coverings of links.
    We also explicitly calculate the $p$-adic limits for the twisted Whitehead links as concrete examples.
    Moreover, in a specific case, we show that our $p$-adic limit of torsion numbers coincides with the $p$-adic torsion, which is a homotopy invariant of a CW-complex introduced by S. Kionke.
\end{abstract}
\section{Introduction}
The cyclic resultants of polynomials appear naturally in various fields of mathematics, such as low dimensional topology and number theory.
For example, in low dimensional topology, Fox--Weber's formula asserts that the torsion number (the size of the torsion subgroup of the first homology group) of a cyclic covering of a knot can be calculated by the cyclic resultant of the Alexander polynomial of the knot (cf. \cite{CWeber1979}).
In number theory, the result analogous to the above holds for global function fields, that is, the class number of a constant extension of a global function field can be calculated by the cyclic resultant of the zeta polynomial of the curve corresponding to the function field (cf. \cite{Rosen2002GTM}*{Corollary to Proposition 8.16} or \cite{UekiYoshizaki-plimits}*{Proposition 7.2}).
As a sequence, cyclic resultants of polynomials coincide with the Lehmer--Pierce sequence, which has long been studied from the perspective of elementary number theory (cf. \cite{Flatters2009}, \cite{Lehmer1933}, \cite{Pierce1916}, and \cite{SKALBA2018}).

Let $p$ be a prime number.
Recently, Ueki and the author showed the $p$-adic convergence of $p$-power cyclic resultants of polynomials and gave an explicit formula for the $p$-adic limit values (\cite{UekiYoshizaki-plimits}*{Theorem 5.3 and Theorem 5.7}).
This study leads us to the numerical study of the $p$-adic limits of the torsion numbers (resp. the class numbers) of $\Z_p$-coverings of knots (resp. constant $\Z_p$-extensions of global function fields).
This article generalizes the former paper to multivariable polynomials.
Let $d$ be a positive integer.
For a polynomial $f \in \Z[t_1,...,t_d]$ of $d$ variables with integral coefficients and positive integers $n_1,...,n_d$, we write
\begin{align*}
    r_{n_1,...,n_d}(f)=\Res(t_1^{p^{n_1}}-1,...,\Res(t_d^{p^{n_d}}-1,f)),
\end{align*}
where the definition of $\Res$ is in \cref{ResDef}.
This article will establish the following.
\newtheorem*{introtheorem1}{Theorem~\ref{multi_p_conv}}
\begin{introtheorem1}
    For all $f\in\Z[t_1,...,t_d]$,
    the sequence $(r_{n_1,...,n_d}(f))$ and its non-$p$-part $(r_{n_1,...,n_d}(f)_{\text{non-}p})$ converge in $\Z_p$.
\end{introtheorem1}

Mayberry--Murasugi~\cite{Mayberry-Murasugi} and Porti~\cite{Porti2004} showed that the size of the first homology group of an abelian covering branched along a link $L$ can be calculated by the multivariable Alexander polynomials of sublinks of $L$.
By combining their results with \cref{multi_p_conv}, we show the $p$-adic convergence of the sizes of the first homology groups in $\Z_p^d$-coverings of links (see \cref{defiOfZpd} for the definition of $\Z_p^d$-coverings).
\newtheorem*{introtheorem2}{Theorem~\ref{convergenceHom}}
\begin{introtheorem2}
    Let $M$ be a closed integral homology $3$-sphere and $L$ be a link in $M$.
    Suppose that for each finite index open subgroup $\Gamma\subset \Z_p^{\,d}$, $M_{\Gamma}$ is a rational homology $3$-sphere, that is, $H_1(M_{\Gamma};\Z)$ is a finite group.
    Then for each descending sequence of finite index open subgroups $\Gamma_1\supset\Gamma_2\supset \cdots$ in $\Z_p^{\,d}$ such that $\cap_{n}\Gamma_n=\{0\}$, the non-$p$-parts of $|H_1(M_{\Gamma_n};\Z)|$ converge in $\Z_p$.
    Moreover, the limit value does not depend on the choice of a sequence of finite index open subgroups.
\end{introtheorem2}
\begin{remark}
    We note that Kionke \cite{Kionke2020} defined a new topological invariant, the (relative) $p$-adic torsion, for a finite CW-complex.
    In a specific case, our limit values in \cref{convergenceHom} coincide with the Kionke's relative $p$-adic torsion (see \cref{p_adic_torsion}).
\end{remark}
\begin{remark}
    The $p$-adic convergence of class numbers in $\Z_p$-extensions of number fields was initially announced by Sinnott in 1985 and published by Han \cite{Han1991}.
    Its generalization and a variant for function fields were established by Kisilevsky using cyclic resultants \cite{Kisilevsky1997PJM}.
    Our previous result in \cite{UekiYoshizaki-plimits} may be seen as a generalization of Kisilevsky's result. 
    We remark that Ozaki proved the $p$-adic convergence of the class numbers and other invariants in a general pro-$p$ extension of a number field, by using analytical method \cite{Ozaki-padiclimit}.
    He also proved a similar result to \cref{convergenceHom} \cite[Theorem 2]{Ozaki-padiclimit}.
\end{remark} 

The contents of this article are as follows.
In Section 2, we recall the one variable version and prepare a general theory of multiple sequences (\cref{CauchyConverge} and \cref{multi_sequentially_limit}).
In Section 3, we show the main theorem (\cref{multi_p_conv}) and give a somewhat general example (\cref{example}).
In Section 4, we show the $p$-adic convergence of the torsion numbers of $\Z_p^d$-coverings of links (\cref{convergenceHom}), and also show that the $p$-adic limit of the torsion numbers coincides with the $p$-adic torsion in a specific case (\cref{p_adic_torsion}).
Finally, we explicitly calculate the $p$-adic limits for the twisted Whitehead links (\cref{twistedWhiteheadLink}).

\section{Preliminaries}
In this section, we prepare some terminology and results we will use.
\subsection{Resultant of polynomials}
First, we recall the definition of the resultant of polynomials.
Let $R$ be an integral domain and $R[t]$ denote the ring of polynomials of variable $t$.
For $f$, $g \in R[t]$, write
\begin{align*}
    f&=a_mt^m+a_{m-1}t^{m-1}+ \cdots + a_0, \\
    g&=b_nt^n+b_{n-1}t^{n-1}+ \cdots + b_0,
\end{align*}
where $a_m$, $b_n\neq 0$.
The Sylvester matrix of $f$ and $g$ is defined by
\begin{align*}
    \syl(f,g)=
    \begin{bmatrix}
        a_m & a_{m-1} & \cdots & a_0   &        &      \\
            & \ddots  & \ddots &        & \ddots &      \\
            &         &   a_m  &a_{m-1} & \cdots& a_0  \\
        b_n & b_{n-1} & \cdots & b_0   &        &      \\
            & \ddots  & \ddots &        & \ddots &      \\
            &         &   b_n  &b_{n-1} & \cdots& b_0
    \end{bmatrix},
\end{align*}
where the number of rows that contain $a_m, ..., a_0$ (resp. $b_n,...,b_0$) is $n$ (resp. $m$), and the remaining elements are $0$.
\begin{definition}\label{ResDef}
    The resultant $\Res(f,g)$ of $f$ and $g$ is the determinant of the Sylvester matrix;
    \begin{align*}
        \Res(f,g)=\det\syl(f,g).
    \end{align*}   
\end{definition}
Let $\text{Frac}(R)$ denote the quotient field of $R$.
We note that there is an equivalent definition of the resultant;
    \begin{align*}
        \Res(f,g)=a_m^nb_n^m\prod_{i,j}(\alpha_i-\beta_j),
    \end{align*}
where $\alpha_i$ ($i=1,...,m$) are the roots of $f$ and $\beta_j$ ($j=1,...,n$) are the roots of $g$ in an algebraic closure $\overline{\text{Frac}(R)}$.
Therefore, we have
\begin{align}\label{exchange_law}
    \Res(f,g)=a_m^n\prod_{i}g(\alpha_i)=(-1)^{mn}b_n^m\prod_{j}f(\beta_j).
\end{align}
We also note that if we take the ring of polynomials as $R$, we can define the resultant of multivariable polynomials.
For example, we set $R'=R[t_1]$ and consider the ring of polynomials $R'[t_2]$, where $t_1$ and $t_2$ are algebraically independent variables over $R$.
For $f$, $g\in R'[t_2]$, $\Res(f,g)$ is a polynomial in $R[t_1]$.

\subsection{$p$-adic limits of cyclic resultants}
Next, we introduce the $p$-adic convergence of the $p$-power cyclic resultants of polynomials and summarize the proof.
Let $\C_p$ denote the $p$-adic completion of an algebraic closure of the $p$-adic number field $\Q_p$.
Let $|\cdot|_p$ denote the $p$-adic absolute value on $\C_p$ normalized so that $|p|_p=1/p$.
For an integer $m$, $m_{\text{non-}p}:=m|m|_p$ denote the non-$p$-part of $m$.
\begin{theorem}[{\cite{UekiYoshizaki-plimits}*{Theorem 5.3}}]\label{p_limits}
    Let $f \in \Z[t]$.
    Then the cyclic resultants $\Res(t^{p^n}-1,f)$ and their non-$p$-parts $\Res(t^{p^n}-1,f)_{\text{non-}p}$ converge in $\Z_p$.
    Moreover, the limit value is $0$ if and only if $f(1)\equiv 0 \mod p$.
\end{theorem}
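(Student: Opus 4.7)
\emph{Proof plan.} The plan is to analyze the resultants one cyclotomic factor at a time. Using the factorization $t^{p^n}-1 = \prod_{k=0}^n \Phi_{p^k}(t)$ together with the multiplicativity of the resultant in its first argument (which follows from formula~\eqref{exchange_law}), one gets the decomposition
\[
r_n := \Res(t^{p^n}-1,f) = \prod_{k=0}^n \rho_k, \qquad \rho_k := \Res(\Phi_{p^k},f).
\]
Since each $\Phi_{p^k}$ is monic, for $k\geq 1$ we also have the norm interpretation $\rho_k = N_{K_k/\Q}(f(\zeta_{p^k}))$ with $K_k := \Q(\zeta_{p^k})$ and $\zeta_{p^k}$ a primitive $p^k$-th root of unity, so the task reduces to controlling the $p$-adic sizes of these cyclotomic norms.

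The central technical step is to establish a congruence of the form
\[
\rho_k \equiv f(1)^{\varphi(p^k)} \pmod{p^{N_k}}
\]
with $N_k \to \infty$ as $k \to \infty$. To this end I would Taylor-expand $f(t) = f(1) + (t-1)q(t)$ with $q \in \Z[t]$, substitute $t = \zeta_{p^k}$ to get $f(\zeta_{p^k}) = f(1) - \pi_k q(\zeta_{p^k})$, where $\pi_k := 1-\zeta_{p^k}$ is a uniformizer of $\mcal{O}_{K_k}$ above $p$ with ramification index $\varphi(p^k)$, and expand the norm $\prod_\sigma \sigma(f(1) - \pi_k q(\zeta_{p^k}))$ as a polynomial in $\pi_k$. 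Careful valuation bookkeeping on the resulting symmetric functions of the Galois conjugates --- in particular using the vanishings $\Tr_{K_k/\Q}(\zeta_{p^k}^j) = 0$ for $k\geq 2$ and $p\nmid j$ --- should deliver the required bound.

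With this congruence in hand, both halves of the theorem follow cleanly. If $f(1)\not\equiv 0 \pmod p$, then each $\rho_k$ is a $p$-adic unit and the ratios $\rho_k/f(1)^{\varphi(p^k)}$ approach $1$ in $\Z_p$ fast enough that the infinite product $\prod_k \rho_k$ converges in $\Z_p^\times$; hence $r_n$ converges in $\Z_p^\times$ and coincides with its non-$p$-part. If $f(1)\equiv 0 \pmod p$, every $\rho_k$ is divisible by $p$, so $v_p(r_n) \geq n+1 \to \infty$ and $r_n \to 0$ in $\Z_p$. For the non-$p$-part in this latter case, I would apply Hensel's lemma to lift the reduction $\bar f \equiv (t-1)^s \bar g \pmod p$ with $\bar g(1)\neq 0$ to a factorization $f = f_0 f_1$ over $\Z_p[t]$, reduce the $f_1$ part to the previous case, and handle $f_0$ separately using $p$-adic logarithm estimates on its roots (which all lie in the disk $|\alpha-1|_p<1$) to isolate the $p$-power and show that the unit complement is Cauchy.

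The main obstacle is the quantitative strengthening in the second paragraph: the trivial congruence $\rho_k \equiv f(1)^{\varphi(p^k)} \pmod p$ is immediate from $\pi_k \mid p$, but pushing it to a congruence modulo an unboundedly growing power of $p$ is what makes the infinite product genuinely Cauchy. This demands exploiting the fine arithmetic of the cyclotomic tower $\Q \subset K_1 \subset K_2 \subset \cdots$ --- the vanishing of partial traces of $\zeta_{p^k}^j$, uniformity estimates across the tower, and the exact ramification structure at $p$ --- rather than only the abstract ring-theoretic properties of $\mcal{O}_{K_k}$.
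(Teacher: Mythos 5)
Your skeleton --- factoring $\Res(t^{p^n}-1,f)$ into the cyclotomic norms $\rho_k=\N_{\Q(\zeta_{p^k})/\Q}(f(\zeta_{p^k}))$ and showing each factor is $p$-adically close to $1$ --- is the same as the paper's, but the one step that carries all the weight is left unproven, and you say so yourself: the congruence $\rho_k\equiv f(1)^{\varphi(p^k)}\bmod p^{N_k}$ with $N_k\to\infty$. The method you propose (expand $\N(f(1)-\pi_k q(\zeta_{p^k}))$ in elementary symmetric functions of the conjugates of $\pi_k q(\zeta_{p^k})$ and use the vanishing of traces of roots of unity) controls only the linear term $e_1=\Tr_{K_k/\Q}(\pi_k q(\zeta_{p^k}))$, which is indeed in $p^{k-1}\Z$. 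For the higher symmetric functions $e_j$ with $2\le j\le\varphi(p^k)-1$, the only bound your valuation bookkeeping yields is $v_p(e_j)\ge j/\varphi(p^k)$, hence $v_p(e_j)\ge 1$ after rounding up to an integer; trace vanishings do not propagate to these terms, so the expansion gives nothing beyond the congruence mod $p$ that you already concede is insufficient. Closing this gap by hand amounts to proving that the norm group of $\Q_p(\zeta_{p^k})^\times$ meets $\Z_p^\times$ exactly in $1+p^k\Z_p$, i.e.\ local class field theory for the cyclotomic extension. The paper obtains the needed congruence in one stroke from Artin reciprocity with modulus $p^n\infty$: when $f(\zeta_{p^n})$ is coprime to $p$, its norm is automatically $\equiv1\bmod p^n$, so consecutive resultants agree mod $p^n$ and the sequence is Cauchy. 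You need to import that tool (global or local reciprocity) or prove the local norm computation directly; the symmetric-function route as described will not get there. (Note also that $f(1)^{\varphi(p^k)}\equiv1\bmod p^k$ by Euler for $f(1)$ prime to $p$, so your target congruence, once $N_k\ge k$, is just $\rho_k\equiv1\bmod p^k$; comparing with $f(1)^{\varphi(p^k)}$ buys nothing.)

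The remaining pieces are essentially right and match the paper: divisibility of every $\rho_k$ by $p$ when $p\mid f(1)$, giving $r_n\to0$, is exactly the paper's argument via $(1-\zeta_{p^n})\mid f(\zeta_{p^n})$. For the non-$p$-part in that degenerate case, your plan (Hensel factorization $f=f_0f_1$ over $\Z_p[t]$, then $p$-adic logarithm estimates on the roots with $|\alpha-1|_p<1$) is a workable alternative --- it is close in spirit to how the paper proves \cref{IwasawaTypeFormula} --- but it is also only a sketch: you still need the convergence of $(\alpha^{p^n}-1)/p^{v_p(\alpha^{p^n}-1)}$, and the unit-norm congruence must be rerun for $f_1\in\Z_p[t]$ rather than $\Z[t]$. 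The paper instead handles the non-$p$-part by applying the same reciprocity argument to the prime-to-$p$ part of the ideal $(f(\zeta_{p^n}))$, one rational prime $l$ at a time.
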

\begin{proof}
    Since $\Z_p$ is complete, it suffices to show that $\Res(t^{p^n}-1,f)$ is a Cauchy sequence.
    Since $\Res(t^{p^n}-1,f)=\prod_{\zeta^{p^n}=1}f(\zeta)$,
    we have
    \begin{align}\label{fracOfRes}
        \frac{\Res(t^{p^n}-1,f)}{\Res(t^{p^{n-1}}-1,f)}=\prod_{\substack{\zeta^{p^n}=1,\\ \zeta^{p^{n-1}}\neq 1}} f(\zeta).
    \end{align}
    If we write $\zeta_{p^n}=\exp(2\pi\sqrt{-1}/p^n)$, the right hand side of (\ref{fracOfRes}) is written by $\N_{\Q(\zeta_{p^n})/\Q}(f(\zeta_{p^n}))$.
    By the Artin's reciprocity law with modulus $\mathfrak{m}=p^n\infty$;
    \begin{align*}
        I_{\mathfrak{m}}(\Q)/(P_{\mathfrak{m}}(\Q)\N_{\Q(\zeta_{p^n})/\Q}(I_{\mathfrak{m}}(\Q(\zeta_{p^n}))))\cong \Gal(\Q(\zeta_{p^n})/\Q)\cong (\Z/p^n\Z)^*,
    \end{align*}
    we see that $\N_{\Q(\zeta_{p^n})/\Q}(f(\zeta_{p^n})) \equiv 1 \mod {p^n}$.
    By considering the prime ideal factorization of $(f(\zeta_{p^n}))$ in $\Q(\zeta_{p^n})$, a similar argument shows that the sequence of $l$-parts of $\Res(t^{p^n}-1,f)$, write $\Res(t^{p^n}-1,f)_l$, also converges in $\Z_p$ for each prime number $l$.

    Next, we give an equivalent condition for $\lim_{n\to \infty} \Res(t^{p^n}-1,f)=0$.
    If $\lim_{n\to \infty}\Res(t^{p^n}-1,f)=0$ in $\Z_p$, by (\ref{fracOfRes}), there exists $n>0$ such that $p\mid \N_{\Q(\zeta_{p^n})/\Q}(f(\zeta_{p^n}))$.
    This means that $(1-\zeta_{p^n}) \mid f(\zeta_{p^n})$ in $\Z[\zeta_{p^n}]$ and $f(t)=(1-t)g(t)+\Phi_{p^n}(t)h(t)$ for some $g$, $h \in \Z[t]$ where $\Phi_m(t)$ denote the $m$-th cyclotomic polynomial.
    Then we have $f(1)\equiv 0 \mod p$.
    Conversely, if $f(1) \equiv 0 \mod p$, we have $f(t)\equiv (1-t)g(t) \mod p$ for some $g\in \Z[t]$, and $f(t)=(1-t)g(t) + ph(t)$ for some $h\in \Z[t]$.
    For all $n>0$, since $(1-\zeta_{p^n})\mid p$ in $\Z[\zeta_{p^n}]$, we see that $(1-\zeta_{p^n}) \mid f(\zeta_{p^n})$.
    This means that $p\mid \N_{\Q(\zeta_{p^n})/\Q}(f(\zeta_{p^n}))$ for all $n>0$ and the limit value is $0$.
\end{proof}
\begin{remark}\label{0_conditions}
    By the proof of \cref{p_limits}, we see that the following are equivalent;
    \begin{itemize}
        \item $f(1)\equiv 0 \mod p$.
        \item There exists $n>0$ such that $\Res(t^{p^n}-1,f)\equiv 0 \mod p$.
        \item For all $n\geq0$ we have $\Res(t^{p^n}-1,f)\equiv 0 \mod p$.
    \end{itemize}
\end{remark}
Moreover, we have an analogue of Iwasawa type formula, that is, there are three invariants which express the growth rate of the $p$-parts of cyclic resultants.
\begin{proposition}[{\cite{UekiYoshizaki-plimits}*{Part of Theorem 5.7}}]\label{IwasawaTypeFormula}
    For $f\in \Z[t]$, there are unique $\lambda, \mu\in \Z_{\geq 0}$ and $\nu \in \Z$ such that
    \begin{align*}
        |\Res(t^{p^n}-1,f)|_p^{-1}=p^{\lambda n+ \mu p^n +\nu}
    \end{align*}
    for all sufficiently large $n$.
\end{proposition}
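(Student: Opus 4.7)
The plan is to reduce the statement to the Weierstrass preparation theorem applied to $f$ viewed as a power series in $T = t-1$. Concretely, I would first set $g(T) = f(1+T) \in \Z[T] \subset \Z_p[[T]]$ and, assuming $f \neq 0$, write $g(T) = p^{\mu} u(T) P(T)$ with $u \in \Z_p[[T]]^{\times}$, $P(T) \in \Z_p[T]$ a distinguished polynomial of degree $\lambda$, and $\mu = \min_i v_p(a_i)$ where $g = \sum a_i T^i$. These two integers will turn out to be the Iwasawa invariants in the statement; both are nonnegative by construction.

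Next I would decompose the resultant along primitive $p^k$-th roots of unity:
\begin{align*}
   \Res(t^{p^n}-1,f) \;=\; \prod_{\zeta^{p^n}=1} f(\zeta) \;=\; f(1) \cdot \prod_{k=1}^{n} \N_{\Q(\zeta_{p^k})/\Q}(f(\zeta_{p^k})),
\end{align*}
so it suffices to control $v_p$ of each factor. Since $\Q(\zeta_{p^k})/\Q$ is totally ramified at $p$ with unique prime $\mathfrak{p}_k$, residue degree $1$, and ramification index $e_k = p^{k-1}(p-1)$, one has the basic identity $v_p(\N_{\Q(\zeta_{p^k})/\Q}(\alpha)) = v_{\mathfrak{p}_k}(\alpha)$ for every $\alpha \in \Q(\zeta_{p^k})^{\times}$. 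Writing $\pi_k = \zeta_{p^k}-1$, which is a uniformizer of $\mathfrak{p}_k$, the Weierstrass factorization gives $f(\zeta_{p^k}) = p^{\mu} u(\pi_k) P(\pi_k)$; since $u(0) \in \Z_p^{\times}$, the factor $u(\pi_k)$ is a unit at $\mathfrak{p}_k$, and there remains only to compute $v_{\mathfrak{p}_k}(P(\pi_k))$.

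The key observation is that for all sufficiently large $k$ the leading term dominates, giving $v_{\mathfrak{p}_k}(P(\pi_k)) = \lambda$. Indeed, if $P(T) = T^\lambda + a_{\lambda-1}T^{\lambda-1} + \cdots + a_0$ with $v_p(a_i) \geq 1$ for $i < \lambda$, then $v_{\mathfrak{p}_k}(a_i\pi_k^i) \geq e_k + i$ while $v_{\mathfrak{p}_k}(\pi_k^{\lambda}) = \lambda$, and since $e_k = p^{k-1}(p-1) \to \infty$, for $k$ with $e_k > \lambda$ the leading term has strictly smallest valuation. Consequently, for all $k \geq k_0$ one obtains
\begin{align*}
   v_p\bigl(\N_{\Q(\zeta_{p^k})/\Q}(f(\zeta_{p^k}))\bigr) \;=\; \mu e_k + \lambda \;=\; \mu p^{k-1}(p-1) + \lambda.
\end{align*}
Summing over $k = 1, \dots, n$, the geometric part telescopes via $\sum_{k=k_0}^{n}\mu p^{k-1}(p-1) = \mu p^{n} - \mu p^{k_0 - 1}$, the arithmetic part contributes $\lambda(n - k_0 + 1)$, and the finitely many initial factors ($k < k_0$ together with $v_p(f(1))$) are absorbed into an integer constant $\nu$, yielding $v_p(\Res(t^{p^n}-1,f)) = \lambda n + \mu p^n + \nu$ for $n \geq k_0$. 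Uniqueness of $(\lambda,\mu,\nu)$ is immediate from the three distinct growth rates $p^n$, $n$, and $1$.

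The main obstacle I anticipate is the careful verification of the threshold step, i.e., confirming that the non-leading terms of the distinguished polynomial $P$ are uniformly subdominant at $\pi_k$ once $e_k > \lambda$; this requires only elementary valuation estimates but must be written cleanly because it is the sole input that turns the Weierstrass data $(\lambda,\mu)$ into the Iwasawa-type exponent. A secondary subtlety is handling the degenerate cases ($\lambda = 0$, $\mu = 0$, or $f(1) = 0$) within the same framework, but these are all covered uniformly by the Weierstrass formalism and contribute only to the constant $\nu$.
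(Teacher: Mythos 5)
Your argument is correct, but it takes a genuinely different route from the paper's. You invoke Weierstrass preparation for $g(T)=f(1+T)$ in $\Z_p[[T]]$, read off $\mu$ and $\lambda$ as the Iwasawa invariants of that series, and then compute $v_{\mathfrak{p}_k}(g(\pi_k))$ layer by layer using the total ramification of $\Q(\zeta_{p^k})/\Q$ and the fact that the leading term of the distinguished polynomial dominates once $e_k=p^{k-1}(p-1)>\lambda$. The paper instead works with the roots $\alpha$ of $f$ in $\C_p$: it splits $\prod_{\alpha}(\alpha^{p^n}-1)$ according to $|\alpha|_p$ and $|\alpha-1|_p$, uses the Newton polygon to dispose of the large roots together with the leading coefficient, identifies $\lambda$ as the number of roots with $|\alpha|_p=1$ and $|\alpha-1|_p<1$, and pins down $\nu$ as $v_p\bigl(\prod\log_p\alpha\bigr)$ by appealing to the convergence of the non-$p$-parts already established in \cref{p_limits}. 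Your version is self-contained (it does not use that convergence) and makes the link with Iwasawa's class number formula transparent, since $\lambda$ and $\mu$ are literally the $\lambda$- and $\mu$-invariants of $f(1+T)$; the paper's version yields as a by-product the explicit description of $\nu$ via $p$-adic logarithms of the roots near $1$. The two descriptions of $\lambda$ agree because the roots of the distinguished polynomial are exactly the roots of $f/p^{\mu}$ with $|\alpha-1|_p<1$, which automatically forces $|\alpha|_p=1$. One caveat, shared with the paper: both proofs implicitly assume $\Res(t^{p^n}-1,f)\neq 0$ for all $n$ (equivalently, $f$ does not vanish at $1$ or at any $p$-power root of unity), without which $|\cdot|_p^{-1}$ is undefined; your closing remark that the degenerate cases contribute only to $\nu$ does not cover $f(1)=0$, where the resultant vanishes identically.
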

Here we give a different proof from \cite{UekiYoshizaki-plimits} in order to make this article self contained.
\begin{proof}
    Let $v_p$ denote the $p$-adic valuation on $\C_p$, and set $e_n(f)=v_p(\Res(t^{p^n}-1,f))$ for $f \in \Z[t]$.
    Let $\mu$ be the maximal integer satisfies that $p^{\mu}$ divides $f$.
    Then we have $\Res(t^{p^n}-1,f)=p^{\mu p^n}\Res(t^{p^n}-1,f/p^{\mu})$.
    Set $g=f/p^{\mu}$.
    Let $a$ be the coefficient of the leading term of $g$.
    Then we have
    \begin{align*}
        e_n(g)=v_p(a^{p^n}\prod_{|\alpha|_p>1}(\alpha^{p^n}-1)\prod_{|\alpha|_p=1}(\alpha^{p^n}-1)\prod_{|\alpha|_p>1}(\alpha^{p^n}-1)),
    \end{align*}
    where $\alpha$ runs the roots of $g$.
    It is easy to see that $v_p(\prod_{|\alpha|_p<1}(\alpha^{p^n}-1))=0$.
    From the Newton polygon of $g$, it follows that $v_p(a\prod_{|\alpha|_p>1}\alpha)=0$.
    Since $v_p(\alpha)=v_p(\alpha-1)$ for $|\alpha|_p>1$ by the strong triangle inequality, we have that $v_p(a^{p^n}\prod_{|\alpha|_p>1}(\alpha^{p^n}-1))=0$.
    If $|\alpha-1|_p=1$ then $|\alpha^{p^n}-1|_p=1$.
    Combining the above calculations for the $p$-adic valuations, we have
    \begin{align*}
        e_n(g)=v_p(\prod_{\substack{|\alpha|_p=1, \\ |\alpha-1|_p<1}}(\alpha^{p^n}-1)).
    \end{align*}
    Here we note that since $\Res(t^{p^n}-1,g)_{\text{non-}p}$ converge, the sequence $(\prod_{|\alpha|_p=1,|\alpha-1|_p<1}(\alpha^{p^n}-1))/p^{e_n(g)}$ converges in $\C_p$ and the limit value is not $0$.
    Thus, by setting $\lambda=\#\{\alpha \mid g(\alpha)=0,\ |\alpha|_p=1,\ |\alpha-1|_p<1\}$, we have
    \begin{align*}
        \lim_{n\to \infty}\frac{\prod_{|\alpha|_p=1,|\alpha-1|_p<1}(\alpha^{p^n}-1)}{p^{e_n(g)}}&=\lim_{n\to\infty}\frac{p^{\lambda n}}{p^{e_n(g)}}\frac{\prod_{|\alpha|_p=1,|\alpha-1|_p<1}(\alpha^{p^n}-1)}{p^{\lambda n}} \\
        &=\left(\prod_{\substack{|\alpha|_p=1, \\ |\alpha-1|_p<1}}\log_p \alpha \right)\lim_{n\to\infty}\frac{p^{\lambda n}}{p^{e_n(g)}},
    \end{align*}
    where $\log_p$ denote the $p$-adic logarithm.

    By setting $\nu=v_p(\prod_{|\alpha|_p=1,|\alpha-1|_p<1}\log_p \alpha)$, we have $v_p(\lim_{n\to\infty}p^{\lambda n+\nu -e_n(g)})=0$.
    This means that $e_n(g)=\lambda n+\nu$ for sufficiently large $n$ and the assertion follows.

\end{proof}
\subsection{Multiple sequences}\label{multi_secuences}
Finally we summarize the theory of multiple sequences.
Let $d$ be a positive integer.
A $d$-tuple multiple integer sequence $(a_{n_1,...,n_d})$ is a function on $\Z_{>0}^d$ to $\Z$ and the image of $(n_1,...,n_d) \in \Z_{>0}^d$ is written by $a_{n_1,...,n_d}$.
\begin{definition}
    A $d$-tuple multiple sequence $(a_{n_1,n_2,...,n_d})$ converges to $\alpha$ in a metric $|\cdot|$, if for all $\e>0$ there exists $N>0$ such that
    for all $n_1, ..., n_d>N$ we have $|a_{n_1,...,n_d}-\alpha|<\e$.
    We denote it as $\lim_{n_1,...,n_d\to\infty}a_{n_1,...,n_d}=\alpha$.
\end{definition}
A multiple sequence $(a_{n_1,...,n_d})$ is called Cauchy sequence if for all $\e>0$, there exists $N>0$ such that for each $m_i>n_i>N$ ($i=1,...,d$), we have $|a_{m_1,...,m_d}-a_{n_1,...,n_d}|<\e$.
Here we recall some important properties of multiple sequences.
\begin{proposition}\label{CauchyConverge}
    In a complete space, a multiple sequence is a convergent sequence if and only if it is a Cauchy sequence.
\end{proposition}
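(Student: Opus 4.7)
The plan is to prove the two implications separately. For the easy direction (convergent $\Rightarrow$ Cauchy), I would simply invoke the triangle inequality: if $a_{n_1,\ldots,n_d}\to\alpha$, then for $\e>0$ there exists $N$ such that $|a_{n_1,\ldots,n_d}-\alpha|<\e/2$ whenever all $n_i>N$, and for any $m_i>n_i>N$ both $a_{m_1,\ldots,m_d}$ and $a_{n_1,\ldots,n_d}$ lie within $\e/2$ of $\alpha$, giving the desired Cauchy estimate.

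For the nontrivial direction, I would reduce the multiple Cauchy condition to a single-indexed one by a diagonal trick. Set $b_n:=a_{n,n,\ldots,n}$. Choosing $m_i=n$ and $n_i=n'$ in the hypothesis (with $n>n'>N$) shows that $(b_n)_n$ is a single-indexed Cauchy sequence, and by completeness of the ambient space it converges to some limit $\alpha$, which will serve as the candidate limit of the full multiple sequence.

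It then remains to verify that indeed $a_{n_1,\ldots,n_d}\to\alpha$. Given $\e>0$, I would pick $N_1$ from the multiple Cauchy condition applied with $\e/2$, pick $N_2$ large enough that $|b_M-\alpha|<\e/2$ for all $M>N_2$, and set $N:=\max(N_1,N_2)$. For any $n_1,\ldots,n_d>N$, I would then choose an integer $M>\max(n_1,\ldots,n_d,N_2)$; setting $m_i=M$ in the Cauchy hypothesis yields $|a_{n_1,\ldots,n_d}-b_M|<\e/2$, and combining this with $|b_M-\alpha|<\e/2$ via the triangle inequality closes the argument.

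There is no genuine obstacle here; the result is a routine generalization of the standard single-variable Cauchy completeness criterion, and the diagonal trick is the usual device for passing from a multi-indexed Cauchy condition to a one-indexed statement. The only care needed is with the quantifier order, so that $M$ can be chosen after $n_1,\ldots,n_d$ have been fixed.
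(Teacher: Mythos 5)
Your proposal is correct and follows essentially the same route as the paper: both prove the forward direction by the triangle inequality and the converse by the diagonal trick of extracting the single-indexed Cauchy sequence $b_n=a_{n,\ldots,n}$, obtaining its limit $\alpha$ by completeness, and then sandwiching $|a_{n_1,\ldots,n_d}-\alpha|$ via a comparison term $b_M$ together with the triangle inequality. The only cosmetic difference is that the paper compares $a_{n_1,\ldots,n_d}$ with $b_n$ for a smaller diagonal index $n$ while you compare with $b_M$ for a larger one; both are valid instances of the multiple Cauchy condition.
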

\begin{proof}
    It is obvious that a multiple convergent sequence is a Cauchy sequence.
    Let $(a_{n_1,...,n_d})$ be a multiple Cauchy sequence.
    For each $n>0$, put $b_n=a_{n,...,n}$.
    We see that for all $\e>0$ there exists $N>0$ such that if $n_1,...,n_d>n>N$ then $|a_{n_1,...,n_d}-b_n|<\e$.
    We note that $(b_n)$ is a usual Cauchy sequence, and there exists the limit value $\lim b_n=\alpha$.
    Thus for the same $\e>0$ as above, there exists $M>0$ such that if $n>M$ then $|b_n-\alpha|<\e$.
    Therefore, if $n_1,...,n_d>n>\max \{N,M\}$ then $|a_{n_1,...,n_d}-\alpha|\leq |a_{n_1,...,n_d}-b_n|+|b_n-\alpha|<2\e$, and $(a_{n_1,...,n_d})$ is a convergent sequence. 
\end{proof}
For a positive integer $d$, let $\sym_d$ denote the symmetric group of degree $d$.
\begin{proposition}\label{multi_sequentially_limit}
    Let $(a_{n_1,...,n_d})$ be a convergent multiple sequence and $\s \in \sym_d$.
    For all $i \in \{1, ..., d-1\}$ we suppose that 
    \begin{align*}
        \lim_{n_{\s(i+1)},...,n_{\s(d)}\to\infty}a_{n_1,...,n_d}
    \end{align*}
    converge for each $n_{\s(1)}$, ..., $n_{\s(i)}>0$.
    Then the limit value
    \begin{align*}
        \lim_{n_{\s(1)}\to \infty}\cdots\lim_{n_{\s(d)}\to\infty}a_{n_1,...,n_d}
    \end{align*}
    exists and the value satisfies
    \begin{align*}
        \lim_{n_1,...,n_d\to\infty}a_{n_1,...,n_d}=\lim_{n_{\s(1)}\to \infty}\cdots\lim_{n_{\s(d)}\to\infty}a_{n_1,...,n_d}.
    \end{align*}
\end{proposition}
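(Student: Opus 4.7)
The plan is to relabel indices so that $\s = \mathrm{id}$, and then build the iterated limit one coordinate at a time via a single $\e$-argument that passes the multiple-limit estimate through one variable. For each $k \in \{0,1,\dots,d\}$, I would introduce the partial limit
\[
b^{(k)}_{n_1,\dots,n_k} := \lim_{n_{k+1},\dots,n_d\to\infty} a_{n_1,\dots,n_d},
\]
with the conventions $b^{(d)}_{n_1,\dots,n_d} = a_{n_1,\dots,n_d}$ and $b^{(0)} = \alpha$, where $\alpha := \lim_{n_1,\dots,n_d\to\infty} a_{n_1,\dots,n_d}$. The hypotheses of the proposition make each $b^{(k)}$ well-defined: $k=0$ is the given multiple convergence, $1 \leq k \leq d-1$ are the assumed iterated partial-limit hypotheses (after the relabeling), and $k=d$ is trivial.

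The core step is to verify, for each $k \in \{1,\dots,d\}$ and each fixed $n_1,\dots,n_{k-1} > 0$, the identity
\[
\lim_{n_k \to \infty} b^{(k)}_{n_1,\dots,n_k} = b^{(k-1)}_{n_1,\dots,n_{k-1}}.
\]
Given $\e > 0$, I would choose $N$ so that $|a_{n_1,\dots,n_d} - b^{(k-1)}_{n_1,\dots,n_{k-1}}| < \e$ whenever $n_k, \dots, n_d > N$; this is exactly the definition of $b^{(k-1)}$ as a $(d-k+1)$-variable multiple limit with $n_1,\dots,n_{k-1}$ held fixed. Fixing any $n_k > N$ and letting $n_{k+1},\dots,n_d \to \infty$, the inequality passes to the limit to give $|b^{(k)}_{n_1,\dots,n_k} - b^{(k-1)}_{n_1,\dots,n_{k-1}}| \leq \e$, from which the claim follows.

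Iterating this reduction from $k = d$ down to $k = 1$ unfolds the full iterated limit and equates it with $b^{(0)} = \alpha$, which is the desired equality. The argument is essentially bookkeeping: there is no genuine obstacle beyond correctly tracking the nested multiple limits, and the single $\e$-argument above absorbs all the work; the only care needed is in justifying that the strict inequality $< \e$ becomes the weak inequality $\leq \e$ upon taking the $(d-k)$-fold inner multiple limit, which is standard.
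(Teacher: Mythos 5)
Your proof is correct and follows essentially the same route as the paper's: both introduce the partial multiple limits and reduce the iterated limit one variable at a time by showing that a single further limit steps down one level. The only difference is cosmetic --- you pass the inequality $|a_{n_1,\dots,n_d}-b^{(k-1)}_{n_1,\dots,n_{k-1}}|<\epsilon$ directly through the inner multiple limit to obtain $\leq\epsilon$, whereas the paper inserts $a_{n_1,\dots,n_d}$ as an intermediate term and uses the triangle inequality to get a $2\epsilon$ bound; both are standard and valid.
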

\begin{proof}
    For each $i\in\{1,...,d-1\}$ and $n_{\s(1)}$,..., $n_{\s(i)}$ we set
    \begin{align*}
        b_{n_{\s(1)},...,n_{\s(i)}}&=\lim_{n_{\s(i+1)}\to\infty}\cdots\lim_{n_{\s(d)}\to\infty}a_{n_1,...,n_d}, \\
        \alpha_{n_{\s(1)},...,n_{\s(i)}}&=\lim_{n_{\s(i+1)},...,n_{\s(d)}\to\infty}a_{n_1,...,n_d}.
    \end{align*}
    We note that the existence of $b_{n_{\s(1)},...,n_{\s(i)}}$ follows from the argument below.
    We show that $b_{n_{\s(1)},...,n_{\s(i)}} = \alpha_{n_{\s(1)},...,n_{\s(i)}}$ by induction on $i$.
    By definition we have $b_{n_{\s(1)},...,n_{\s(d-1)}}=\alpha_{n_{\s(1)},...,n_{\s(d-1)}}$.
    Assume that $b_{n_{\s(1)},...,n_{\s(i)}}=\alpha_{n_{\s(1)},...,n_{\s(i)}}$.
    We show that
    \begin{align*}
        \lim_{n_{\s(i)}\to \infty}b_{n_{\s(1)},...,n_{\s(i)}}=\alpha_{n_{\s(1)},...,n_{\s(i-1)}}
    \end{align*}
    for each $n_{\s(1)}, ..., n_{\s(i-2)}$.
    By the assumption, for all $\e>0$ there exists $N>0$ such that if $n_{\s(i+1)}, ..., n_{\s(d)}>N$ then we have $|a_{n_1,...,n_d}-b_{n_{\s(1)},...,n_{\s(i)}}|=|a_{n_1,...,n_d}-\alpha_{n_{\s(1)},...,n_{\s(i)}}|<\e$.
    On the other hand, for the same $\e>0$ there exists $M>0$ such that if $n_{\s(i)}, ..., n_{\s(d)}>M$ then we have $|a_{n_1,...,n_d}-\alpha_{n_{\s(1)},...,n_{\s(i-1)}}|<\e$.
    Therefore if $n_{\s(i)}>M$ and take $n_{\s(i+1)}, ..., n_{\s(d)}>\max\{M,N\}$, we have $|b_{n_{\s(1)},...,n_{\s(i)}}-\alpha_{n_{\s(1)},...,n_{\s(i-1)}}|\leq|b_{n_{\s(1)},...,n_{\s(i)}}-a_{n_1,...,n_d}|+|a_{n_1,...,n_d}-\alpha_{n_{\s(1)},...,n_{\s(i-1)}}|<2\e$.
    Thus we have $\lim_{n_{\s(i)}\to\infty}b_{n_{\s(1)},...,n_{\s(i)}}=\alpha_{n_{\s(1)},...,n_{\s(i-1)}}$.
    
    It also holds that $\lim_{n_{\s(1)}\to\infty} b_{n_{\s(1)}}=\lim_{n_{\s(1)},...,n_{\s(d)}\to\infty}a_{n_1,...,n_d}$ from $b_{n_{\s(1)}}=\alpha_{n_{\s(1)}}$ by the same argument above, and the assertion follows.
\end{proof}
\section{The $p$-adic convergence of multiple cyclic resultants}
For $f(t_1,...,t_d)\in \Z[t_1,...,t_d]$ and $n_1,...,n_d \in \Z_{>0}$, we write
\begin{align*}
    r_{n_1,...,n_d}(f)=\Res(t_1^{p^{n_1}}-1,...,\Res(t_d^{p^{n_d}}-1,f(t_1,...,t_d))).
\end{align*}

First we generalize \cref{0_conditions}.
Set $\mu_p=\{\zeta \in \overline{\Q}_p\mid \exists N\in\Z \text{ such that }\zeta^{p^N}=1\}$.
\begin{lemma}\label{multi_0_conditins}
     Let $f(t_1,...,t_d)\in\Z[t_1,...,t_d]$.
     Then the following hold.
     \begin{itemize}
        \item[(1)] For all $\zeta_1, ..., \zeta_d \in \mu_p$ we have $|f(\zeta_1,...,\zeta_d)-f(1,...,1)|_p<1$.
        \item[(2)] The following are equivalent;
        \begin{itemize}
            \item[\labelitemi] $f(1,...,1)\equiv 0 \mod p$.
            \item[\labelitemi] There exists $n_1, ..., n_d>0$ such that $r_{n_1,...,n_d}(f)\equiv 0 \mod p$.
            \item[\labelitemi] For all $n_1, ..., n_d>0$ we have $r_{n_1,...,n_d}(f)\equiv 0 \mod p$.
        \end{itemize}    
     \end{itemize}
\end{lemma}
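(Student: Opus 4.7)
The plan is to prove (1) first by a telescoping argument, then deduce (2) by expressing $r_{n_1,\ldots,n_d}(f)$ as an iterated product of values of $f$ at tuples of $p$-power roots of unity and applying (1) factor by factor.

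For (1), the key input is that $|\zeta-1|_p<1$ for every $\zeta\in\mu_p$: if $\zeta$ is a primitive $p^n$-th root of unity then $1-\zeta$ is a uniformizer of the totally ramified extension $\Q_p(\zeta)/\Q_p$, and $\zeta=1$ contributes $0$. A straightforward telescoping
\begin{align*}
    f(t_1,\ldots,t_d)-f(1,\ldots,1)=\sum_{i=1}^{d}(t_i-1)\,g_i(t_i,t_{i+1},\ldots,t_d),
\end{align*}
with $g_i\in\Z[t_i,\ldots,t_d]$, reduces the claim to the single-variable case. Evaluating at $(\zeta_1,\ldots,\zeta_d)\in\mu_p^d$ and noting that $|g_i(\zeta_i,\ldots,\zeta_d)|_p\leq 1$ (a polynomial with integer coefficients evaluated at algebraic integers), the ultrametric inequality yields $|f(\zeta_1,\ldots,\zeta_d)-f(1,\ldots,1)|_p<1$.

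For (2), I would first unfold the nested resultant using (\ref{exchange_law}). Since each $t_j^{p^{n_j}}-1$ is monic in $t_j$, the leading-coefficient factor in the formula is $1$, and iterating gives
\begin{align*}
    r_{n_1,\ldots,n_d}(f)=\prod_{\zeta_1^{p^{n_1}}=1}\cdots\prod_{\zeta_d^{p^{n_d}}=1}f(\zeta_1,\ldots,\zeta_d).
\end{align*}
By (1) each of the $p^{n_1+\cdots+n_d}$ factors satisfies $|f(\zeta_1,\ldots,\zeta_d)-f(1,\ldots,1)|_p<1$, and multiplying out produces
\begin{align*}
    \bigl|\,r_{n_1,\ldots,n_d}(f)-f(1,\ldots,1)^{p^{n_1+\cdots+n_d}}\bigr|_p<1.
\end{align*}
Both quantities being in $\Z$, this means
\begin{align*}
    r_{n_1,\ldots,n_d}(f)\equiv f(1,\ldots,1)^{p^{n_1+\cdots+n_d}}\pmod{p}.
\end{align*}
The cyclic equivalence is then immediate: if $f(1,\ldots,1)\equiv 0\pmod p$ the right-hand side vanishes for every tuple, and otherwise it is a unit mod $p$ for every tuple, so no $r_{n_1,\ldots,n_d}(f)$ can be divisible by $p$.

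The step needing the most care is the unfolding of the iterated resultant into the product over tuples of roots of unity, but because every intermediate resultant is taken against a polynomial that is monic in the current outermost variable, (\ref{exchange_law}) applies cleanly at each stage with no extraneous leading-coefficient contributions; everything else is a direct application of the ultrametric inequality together with the standard ramification behaviour of cyclotomic extensions at $p$.
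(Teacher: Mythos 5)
Your proof is correct, and for part (2) it takes a genuinely different route from the paper. For part (1) the two arguments are the same in substance: the paper expands $f$ term by term and uses $|\zeta-1|_p<1$ for $\zeta\in\mu_p$, whereas you telescope in the variables; both reduce to the same ramification fact. For part (2), however, the paper argues by induction on the number of variables: it specializes the first $i-1$ variables to $1$, forms the one-variable polynomials $f_{n_{i+1},\ldots,n_d}(t_i)$, and applies the single-variable equivalence of \cref{0_conditions} (hence the cyclotomic norm argument behind \cref{p_limits}) a total of $d$ times, without using part (1) at all. You instead unfold the iterated resultant into the full product $\prod_{\zeta_1^{p^{n_1}}=1}\cdots\prod_{\zeta_d^{p^{n_d}}=1}f(\zeta_1,\ldots,\zeta_d)$ --- legitimate, since each resultant is taken against the monic polynomial $t_i^{p^{n_i}}-1$, so \cref{exchange_law} contributes no leading-coefficient factors --- and deduce $r_{n_1,\ldots,n_d}(f)\equiv f(1,\ldots,1)^{p^{n_1+\cdots+n_d}}\pmod p$ directly from (1), via the telescoping bound $\bigl|\prod_i a_i-b^N\bigr|_p\le\max_i|a_i-b|_p$ valid for elements of absolute value at most $1$. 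Your version is more self-contained (the lemma no longer depends on the one-variable remark) and yields strictly more, namely the explicit residue of $r_{n_1,\ldots,n_d}(f)$ modulo $p$; the paper's inductive formulation, on the other hand, sets up the bookkeeping it reuses in the proof of \cref{multi_p_conv}, where the product expansion you justify here is invoked without separate comment.
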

\begin{proof}
        Let $at_1^{m_1}\cdots t_d^{m_d}$ be one of the terms of $f$.
        Since $\zeta_1^{m_1}\cdots \zeta_d^{m_d}$ is a $p$-power-th root of unity, we see that $|a\zeta_1^{m_1}\cdots\zeta_d^{m_d}-a|_p<1$.
        Thus we have $|f(\zeta_1,...,\zeta_d)-f(1,...,1)|_p<1$ and the assertion of (1) follows.

        For each $i=1, ..., d-1$ and positive integers $n_{i+1}, ..., n_d$, define
        \begin{align*}
            f_{n_{i+1},...,n_d}(t_i)=\Res(t_{i+1}^{p^{n_{i+1}}}-1,...,\Res(t_d^{p^{n_d}}-1,f(1,...,1,t_i,t_{i+1},...,t_d))).
        \end{align*}
        We note that $r_{n_1,...,n_d}(f)=\Res(t_1^{p^{n_1}}-1,f_{n_2,...,n_d}(t_1))$.
        By using \cref{0_conditions} several times we have
        \begin{align*}
            &f(1,...,1)\equiv 0 \mod p \\
            \Leftrightarrow&\ f_{n_d}(1)\equiv 0 \mod p \\
            \Leftrightarrow&\ f_{n_{d-1},n_d}(1)\equiv 0 \mod p \\
            \cdots& \\
            \Leftrightarrow&\ f_{n_2,...,n_d}(1)\equiv 0 \mod p \\
            \Leftrightarrow&\ r_{n_1,...,n_d}(f) \equiv 0 \mod p
        \end{align*}
        and the assertion of (2) follows.
\end{proof}

\begin{theorem}\label{multi_p_conv}
    Let $f(t_1,...,t_d)\in \Z[t_1,...,t_d]$.
    Then the $d$-tuple multiple sequence $(r_{n_1,...,n_d}(f))$ and its non-$p$-parts $(r_{n_1,...,n_d}(f)_{\text{non-}p})$ converge in $\Z_p$.
    Moreover the limit value is $0$ if and only if $f(1,...,1)\equiv 0 \mod p$.
\end{theorem}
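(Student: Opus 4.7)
The plan is to verify that $(r_{n_1,\ldots,n_d}(f))$ is a multiple Cauchy sequence; convergence in $\Z_p$ then follows from \cref{CauchyConverge}. Iterating the formula $\Res(t^M-1,h(t))=\prod_{\zeta^M=1}h(\zeta)$ gives
\begin{align*}
    r_{n_1,\ldots,n_d}(f)=\prod_{\zeta_i^{p^{n_i}}=1}f(\zeta_1,\ldots,\zeta_d)=\prod_{0\leq k_i\leq n_i}P_{k_1,\ldots,k_d}(f),
\end{align*}
where $P_{k_1,\ldots,k_d}(f):=\prod f(\zeta_1,\ldots,\zeta_d)$ with each $\zeta_i$ running over primitive $p^{k_i}$-th roots of unity. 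For $m_i\geq n_i$, the quotient $r_{m_1,\ldots,m_d}(f)/r_{n_1,\ldots,n_d}(f)$ is the product of those $P_{k_1,\ldots,k_d}(f)$ with $k_i\leq m_i$ and $k_j>n_j$ for at least one index $j$; every such tuple satisfies $\max_i k_i\geq \min_j n_j+1$.

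First suppose $f(1,\ldots,1)$ is coprime to $p$. I claim $P_{k_1,\ldots,k_d}(f)\equiv 1\bmod p^N$ whenever $N:=\max_i k_i\geq 1$. Fixing primitive $p^{k_i}$-th roots inside $K:=\Q(\zeta_{p^N})$, the group $G=\Gal(K/\Q)=(\Z/p^N\Z)^\times$ acts diagonally on tuples of such roots, and the stabilizer of any tuple is trivial because any fixer must satisfy $a\equiv 1\bmod p^{\max k_i}=p^N$. Hence each $G$-orbit has size $\phi(p^N)$, and $P_{k_1,\ldots,k_d}(f)$ decomposes as a product over orbit representatives of norms $N_{K/\Q}(f(\zeta_1,\ldots,\zeta_d))$. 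By \cref{multi_0_conditins}(1), each value $f(\zeta_1,\ldots,\zeta_d)$ is a unit at the unique prime above $p$ in $\Z[\zeta_{p^N}]$, so the Artin-reciprocity argument from the proof of \cref{p_limits}, applied with modulus $\mathfrak{m}=p^N\infty$, forces each such norm (and hence $P_{k_1,\ldots,k_d}(f)$) to be $\equiv 1\bmod p^N$. Multiplying these congruences over the finitely many tuples in the quotient yields $r_{m_1,\ldots,m_d}(f)\equiv r_{n_1,\ldots,n_d}(f)\bmod p^{\min_j n_j+1}$, establishing the Cauchy property.

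Next suppose $f(1,\ldots,1)\equiv 0\bmod p$. By \cref{multi_0_conditins}(2), $p\mid r_{n_1,\ldots,n_d}(f)$ always, so it suffices to show $v_p(r_{n_1,\ldots,n_d}(f))\to\infty$ as $\min_i n_i\to\infty$. For $d=1$ this is \cref{IwasawaTypeFormula}. For $d\geq 2$, \cref{multi_0_conditins}(1) implies $v_p(f(\zeta_1,\ldots,\zeta_d))\geq 1/\phi(p^{\max_i n_i})$ for every tuple of $p^{n_i}$-th roots, whence
\begin{align*}
    v_p(r_{n_1,\ldots,n_d}(f))\geq \frac{p^{n_1+\cdots+n_d}}{\phi(p^{\max_i n_i})}=\frac{p^{\,1+(n_1+\cdots+n_d-\max_i n_i)}}{p-1}\geq \frac{p^{\,1+(d-1)\min_i n_i}}{p-1},
\end{align*}
which tends to $\infty$, using $\sum_i n_i-\max_i n_i\geq (d-1)\min_i n_i$. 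The corresponding non-$p$-part statement follows by the same Artin-reciprocity recipe applied prime by prime, tracking the factorization of the ideal $(f(\zeta_1,\ldots,\zeta_d))$ in $\Z[\zeta_{p^N}]$ exactly as in the proof of \cref{p_limits}.

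The main difficulty I anticipate is the careful execution of the Artin-reciprocity step in several variables: verifying that the diagonal $G$-orbits all have size $\phi(p^N)$, identifying $P_{k_1,\ldots,k_d}(f)$ as a product of genuine absolute norms from $K=\Q(\zeta_{p^N})$ to $\Q$, and ensuring the $\equiv 1\bmod p^N$ conclusion survives the multiplication over orbits (including the usual sign subtlety in Artin reciprocity). Once this is in place, the remainder is a direct telescoping argument in parallel with the one-variable proof.
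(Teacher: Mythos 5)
Your proof is correct, and it takes a genuinely different (though closely parallel) route from the paper's. The paper reduces everything to the one-variable case: for each coordinate $i$ it freezes the remaining variables, packages the iterated resultant as a one-variable cyclic resultant $\Res(t_i^{p^{n_i}}-1,g(t_i))$ of an auxiliary polynomial $g\in\Z[t_i]$, applies the Artin-reciprocity congruence from the proof of \cref{p_limits} coordinate by coordinate, and chains the resulting congruences to obtain $r_{N_1,\ldots,N_d}(f)\equiv r_{n_1,\ldots,n_d}(f)\bmod p^{\min_i n_i}$; in the case $p\mid f(1,\ldots,1)$ it shows that each single increment of a coordinate multiplies the resultant by a norm divisible by $p$, so the $p$-valuation grows without bound. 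You instead decompose $r_{n_1,\ldots,n_d}(f)$ into layers $P_{k_1,\ldots,k_d}(f)$ indexed by the exact orders of the roots of unity and exploit the diagonal action of $\Gal(\Q(\zeta_{p^N})/\Q)$ on tuples of primitive roots; your observation that the stabilizer of a tuple is trivial (so each layer is an honest product of norms from $\Q(\zeta_{p^{\max_i k_i}})$) is the new ingredient, and it buys you the clean one-shot congruence $r_{m_1,\ldots,m_d}(f)\equiv r_{n_1,\ldots,n_d}(f)\bmod p^{1+\min_j n_j}$ without any chaining, together with a symmetric picture of where each power of $p$ comes from. Your treatment of the vanishing case is also different: a quantitative ramification bound $v_p(f(\zeta_1,\ldots,\zeta_d))\geq 1/\phi(p^{\max_i n_i})$ summed over all $p^{n_1+\cdots+n_d}$ tuples, versus the paper's strict-increase argument; both work, and your version needs the $d=1$ fallback you supply. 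Both proofs rest on the same reciprocity input and are equally terse on the non-$p$-part step. One small omission: the paper first disposes of the possibility that some $r_{n_1,\ldots,n_d}(f)=0$ (in which case all later terms vanish and both sequences are eventually $0$), which your quotient $r_{m_1,\ldots,m_d}(f)/r_{n_1,\ldots,n_d}(f)$ and your ideal-factorization step tacitly assume away; in the unit case the needed nonvanishing follows from \cref{multi_0_conditins}(2), but you should say so.
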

\begin{proof}
    If there exist $n_1, ..., n_d$ such that $r_{n_1,...,n_d}(f)=0$ then we see that $r_{m_1,...,m_d}(f)=0$ for all $m_i>n_i$ ($i=1,...,d$).
    Thus we suppose that $r_{n_1,...,n_d}(f)\neq0$ for any $n_1, ..., n_d$ throughout this proof.

    First we show that $\lim_{n_1,...,n_d\to \infty}r_{n_1,...,n_d}(f)=0$ if and only if $f(1,...,1)\equiv 0 \mod p$.
    If $\lim_{n_1,...,n_d\to\infty}r_{n_1,...,n_d}(f)=0$ then there are $n_1, ..., n_d>0$ such that $r_{n_1,...,n_d}(f)\equiv 0 \mod p$.
    By (2) of \cref{multi_0_conditins}, we have $f(1,...,1)\equiv 0 \mod p$.
    Conversely, we suppose that $f(1,...,1)\equiv 0 \mod p$.
    For each $i=1,...,d$ and each $n_1,...,n_{i-1}, n_{i+1},...,n_d \in \Z_{>0}$, we set
    \begin{align*}
        &g_{n_1,...,n_{i-1},n_{i+1},...,n_d}(f)(t_i) \\
        &=\Res(t_1^{p^{n_1}}-1,...,\Res(t_{i-1}^{p^{n_{i-1}}}-1,\Res(t_{i+1}^{p^{n_{i+1}}}-1,...,\Res(t_d^{p^{n_d}}-1,f)))).
    \end{align*}
    We note that $r_{n_1,...,n_d}(f)=\Res(t_i^{p^{n_i}}-1,g_{n_1,...,n_{i-1},n_{i+1},...,n_d}(f))$.
    Then for each $i=1,...,d$ and $n_i>1$ we have
    \begin{align*}
        \frac{r_{n_1,...,n_i,...,n_d}(f)}{r_{n_1,...,n_i-1,...,n_d}(f)}=\N_{\Q(\zeta_{p^{n_i}})/\Q}(g_{n_1,...,n_{i-1},n_{i+1},...,n_d}(f)(\zeta_{p^{n_i}})).
    \end{align*}
    Here we claim that $g_{n_1,...,n_{i-1},n_{i+1},...,n_d}(f)(\zeta_{p^{n_i}})$ is not relatively prime to $p$ in $\Q(\zeta_{p^{n_i}})$.
    Indeed, we have
    \begin{align*}
        &g_{n_1,...,n_{i-1},n_{i+1},...,n_d}(f)(\zeta_{p^{n_i}}) \\
        &=\prod_{\zeta_1^{p^{n_1}}=1}\cdots\prod_{\zeta_{i-1}^{p^{n_{i-1}}}=1}\prod_{\zeta_{i+1}^{p^{n_{i+1}}}=1}\cdots\prod_{\zeta_d^{p^{n_d}}=1}f(\zeta_1,...,\zeta_{{i-1}},\zeta_{p^{n_i}},\zeta_{{i+1}},...,\zeta_d).
    \end{align*}
    By using the assumption $f(1,...,1)\equiv 0 \mod p$ and (1) of \cref{multi_0_conditins}, we see that each $f(\zeta_1,...,\zeta_d)$ in the above product is not relatively prime to $p$, and the assertion of the claim follows.
    Thus we have $\N_{\Q(\zeta_{p^{n_i}})/\Q}(g_{n_1,...,n_{i-1},n_{i+1},...,n_d}(\zeta_{p^{n_i}}))\equiv 0 \mod p$.
    This argument holds for each tuple of $(n_1,...,n_d)$.
    Therefore, the $p$-parts of $r_{n_1,...,n_d}(f)$ strictly increase when $(n_1,...,n_d)$ increase, and we have $\lim_{n_1,...,n_d\to\infty }r_{n_1...,n_d}(f)=0$.

    Next we suppose that $f(1,..,1)\not \equiv 0 \mod p$.
    We note that $r_{n_1,...,n_d}(f) \not \equiv 0 \mod p$ for any $n_1, ..., n_d>0$ by (2) of \cref{multi_0_conditins}.
    We show that the sequence $(r_{n_1,n_2,...,n_d}(f))$ is a Cauchy sequence in $\Z_p$.
    By using the Artin's reciprocity law in the same way as the proof of \cref{p_limits} for $g_{n_2,...,n_d}(t_1)$, we have $r_{N_1,n_2,...,n_d}(f)\equiv r_{n_1,n_2,...,n_d}(f)\mod{p^{n_1}}$ for all $N_1>n_1$.
    By repeating this process several times, we obtain that
    \begin{align}\label{multi_equiv}
        r_{N_1,N_2,...,N_d}(f)\equiv r_{n_1,n_2,...,n_d}(f)\mod{p^{\min \{n_1, n_2,...,n_d\}}}        
    \end{align}
    for all $N_i>n_i$ ($i=1,2,...,d$), and the assertion follows.
    
    In the case of $f(1,...,1)\equiv 0 \mod p$, namely the limit value is $0$,
    the non-$p$-parts of $(r_{n_1,...,n_d}(f))$ also satisfies the equivalence condition of (\ref{multi_equiv}).
    Thus we see that the sequence of the non-$p$-parts $(r_{n_1,...,n_d}(f)_{\text{non-}p})$ also converges.
\end{proof}

The proof of \cref{multi_p_conv} yields that the following result holds.
\begin{corollary}\label{sub_prod_seq}
    Let $f\in\Z[t_1,...,t_d]$.
    Fix $n_1,...,n_d$ and let $g_i(t_i)$ be a divisor of $t_i^{p^{n_i}}-1$ for each $i=1,...,d$.
    Then 
    \begin{align*}
        \Res\left(\frac{t_1^{p^{n_1}}-1}{g_1(t_1)},...,\Res\left(\frac{t_d^{p^{n_d}}-1}{g_d(t_d)},f\right)\right)
    \end{align*}
    and their non-$p$-parts also converge in $\Z_p$.
\end{corollary}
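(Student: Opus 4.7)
The plan is to mimic the proof of \cref{multi_p_conv}, since the quotient polynomials $h_i(t_i) := (t_i^{p^{n_i}}-1)/g_i(t_i)$ are still monic integer polynomials whose roots form a Galois-stable subset of $\mu_p$. I read the statement as: with the divisors $g_i$ held fixed, the multiple sequence
\[
R(n_1,\ldots,n_d) := \Res\bigl(h_1(t_1),\ldots,\Res(h_d(t_d),f)\bigr)
\]
(defined once each $n_i$ is large enough that $g_i \mid t_i^{p^{n_i}}-1$) converges in $\Z_p$, and likewise for its non-$p$-parts. As in the proof of \cref{multi_p_conv}, I may harmlessly assume all $R(n_1,\ldots,n_d)\neq 0$.

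First I would use the product formula
\[
R(n_1,\ldots,n_d) = \prod_{\zeta_1 \in W_1^{(n_1)}} \cdots \prod_{\zeta_d \in W_d^{(n_d)}} f(\zeta_1,\ldots,\zeta_d),
\]
where $W_i^{(n_i)} := \{\zeta : \zeta^{p^{n_i}}=1\} \setminus Z(g_i)$; this is valid since every $h_i$ is monic. Then I would transcribe the Cauchy argument from the proof of \cref{multi_p_conv}: for $N_1 > n_1$, the ratio $R(N_1,n_2,\ldots,n_d)/R(n_1,n_2,\ldots,n_d)$ equals $\prod_{\zeta_1} G(\zeta_1)$, where $\zeta_1$ ranges over the primitive $p^k$-th roots of unity for $n_1 < k \leq N_1$ (using that $Z(g_1)$ is absorbed into the $p^{n_1}$-th roots of unity for $n_1$ large) and
\[
G(t_1) := \prod_{\zeta_2 \in W_2^{(n_2)}} \cdots \prod_{\zeta_d \in W_d^{(n_d)}} f(t_1,\zeta_2,\ldots,\zeta_d) \in \Z[t_1].
\]
Grouping by $k$ factors this into norms $\N_{\Q(\zeta_{p^k})/\Q}(G(\zeta_{p^k}))$. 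If $f(1,\ldots,1) \not\equiv 0 \pmod p$, then by \cref{multi_0_conditins}(1) every factor $f(\zeta_1,\ldots,\zeta_d)$, hence $G(\zeta_{p^k})$, is a $p$-adic unit, so Artin reciprocity as in the proof of \cref{p_limits} forces each such norm $\equiv 1 \pmod{p^k}$. Iterating over the remaining coordinates reproduces the congruence at the end of the proof of \cref{multi_p_conv} and establishes that $(R(n_1,\ldots,n_d))$ is Cauchy in $\Z_p$.

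If instead $f(1,\ldots,1) \equiv 0 \pmod p$, then by \cref{multi_0_conditins}(1) every factor $f(\zeta_1,\ldots,\zeta_d)$ in the product formula has positive $p$-adic valuation, so $v_p(R(n_1,\ldots,n_d))$ strictly grows in each coordinate and $R(n_1,\ldots,n_d) \to 0$. The non-$p$-part Cauchyness is obtained by the same Artin-reciprocity argument applied to the prime-to-$p$ parts of the principal ideals $(G(\zeta_{p^k}))$, exactly as at the end of the proof of \cref{multi_p_conv}.

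The only step that demands a real check, rather than a verbatim copy, is verifying that $G(t_1) \in \Z[t_1]$, i.e.\ that the inner product over $W_j^{(n_j)}$ yields integer coefficients; this reduces to Galois stability of each $W_j^{(n_j)}$, which holds because $W_j^{(n_j)}$ is the root set of the integer polynomial $h_j$. Once this bookkeeping is in place, no new $p$-adic input is required and the entire proof of \cref{multi_p_conv} transfers without modification.
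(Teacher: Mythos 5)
Your proposal is correct and takes essentially the same route as the paper, which proves this corollary simply by remarking that the proof of \cref{multi_p_conv} carries over verbatim; your write-up just makes explicit the bookkeeping (monicity and Galois-stability of the quotient polynomials $(t_i^{p^{n_i}}-1)/g_i(t_i)$, and the fact that the new roots appearing when an exponent increases are exactly the primitive $p^k$-th roots of unity) that the paper leaves implicit.
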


We also obtain the following result by \cref{multi_sequentially_limit} and the proof of \cref{multi_p_conv}.
\begin{corollary}
    We have
    \begin{align*}
        \lim_{n_1,...,n_d \to \infty}r_{n_1,...,n_d}(f)=\lim_{n_{\s(1)}\to \infty}\cdots \lim_{n_{\s(d)}\to \infty}r_{n_1,...,n_d}(f)
    \end{align*}
    for all $f\in \Z[t_1,...,t_d]$ and $\s \in \sym_d$.
\end{corollary}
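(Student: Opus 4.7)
The plan is to invoke \cref{multi_sequentially_limit} applied to the multiple sequence $(r_{n_1,...,n_d}(f))$ in $\Z_p$. Its first hypothesis---existence of the full multiple limit---is already supplied by \cref{multi_p_conv}. What remains is to verify, for every $\s\in\sym_d$ and every $i\in\{1,...,d-1\}$, the existence of the partial limit
\[
\lim_{n_{\s(i+1)},...,n_{\s(d)}\to\infty}r_{n_1,...,n_d}(f)
\]
in $\Z_p$ for each fixed choice of $n_{\s(1)},...,n_{\s(i)}\in\Z_{>0}$.

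The key observation will be that iterated $p$-power cyclic resultants are symmetric in the order in which the variables are treated. Iterating formula (\ref{exchange_law}) with the monic polynomials $t_i^{p^{n_i}}-1$ yields
\[
r_{n_1,...,n_d}(f)=\prod_{\zeta_1^{p^{n_1}}=1}\cdots\prod_{\zeta_d^{p^{n_d}}=1}f(\zeta_1,...,\zeta_d),
\]
a product which is manifestly invariant under permutation of the indices. Consequently, for every $\tau\in\sym_d$,
\[
r_{n_1,...,n_d}(f)=\Res(t_{\tau(1)}^{p^{n_{\tau(1)}}}-1,...,\Res(t_{\tau(d)}^{p^{n_{\tau(d)}}}-1,f)).
\]

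With this reordering in hand, I would fix $\s$, $i$, and $n_{\s(1)},...,n_{\s(i)}$, and then set
\[
F(t_{\s(i+1)},...,t_{\s(d)})=\Res(t_{\s(1)}^{p^{n_{\s(1)}}}-1,...,\Res(t_{\s(i)}^{p^{n_{\s(i)}}}-1,f))\in\Z[t_{\s(i+1)},...,t_{\s(d)}].
\]
Choosing $\tau$ so that $\s(1),...,\s(i)$ are listed first, the symmetry of iterated resultants identifies $r_{n_1,...,n_d}(f)$ with the $(d-i)$-variable $p$-power cyclic resultant of $F$ at the exponents $n_{\s(i+1)},...,n_{\s(d)}$. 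Applying \cref{multi_p_conv} to $F$ (regarded as a polynomial in $d-i$ variables) then supplies the required convergence of the partial limit. With both hypotheses of \cref{multi_sequentially_limit} confirmed, the equality claimed in the corollary follows.

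The only delicate point is the reordering of iterated resultants, but it is immediate from the product-of-values expansion and has already been tacitly used in the proof of \cref{multi_p_conv}, so I expect no real obstacle in the execution.
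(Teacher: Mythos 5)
Your proposal is correct and follows exactly the route the paper intends: the paper's one-line justification cites \cref{multi_sequentially_limit} together with the proof of \cref{multi_p_conv}, and your verification of the partial-limit hypothesis—via the symmetric product expansion $r_{n_1,...,n_d}(f)=\prod_{\zeta_1^{p^{n_1}}=1}\cdots\prod_{\zeta_d^{p^{n_d}}=1}f(\zeta_1,...,\zeta_d)$ and an application of \cref{multi_p_conv} to the intermediate resultant polynomial $F$ in the remaining $d-i$ variables—is precisely the fleshing-out of that citation (your $F$ is the analogue of the paper's $g_{n_1,...,n_{i-1},n_{i+1},...,n_d}(f)$). No gaps.
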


For use in \cref{link_homology}, we give a result for the signatures of cyclic resultants.
This is a generalization of \cite{UekiYoshizaki-plimits}*{Lemma 5.1}.
\begin{proposition}\label{multi_sgn_prop}
    Let $0\neq f(t_1,,...,t_d) \in\Z[t_1,...,t_d]$ and suppose that $r_{n_1,...,n_d}(f)\neq0$ for any $n_1,...,n_d>0$.
    If $p\neq2$, then we have $r_{n_1,...,n_d}(f)>0$ if and only if $f(1,...,1)>0$.
    If $p=2$, then we have $r_{n_1,...,n_d}(f)>0$ if and only if $r_{1,...,1}(f)>0$.
\end{proposition}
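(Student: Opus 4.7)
The plan is to express the iterated resultant as a single product over roots of unity and then exploit complex conjugation on $\Z[t_1,\ldots,t_d]$ to isolate the sign-determining contributions. First, I would iterate the identity $\Res(t^N-1,h(t))=\prod_{\zeta^N=1}h(\zeta)$ (which holds without sign correction because $t^N-1$ is monic, giving $a_m^n=1$ in $\Res(f,g)=a_m^n\prod_i g(\alpha_i)$) to obtain
$$r_{n_1,\ldots,n_d}(f)\;=\;\prod_{\zeta_1^{p^{n_1}}=1}\cdots\prod_{\zeta_d^{p^{n_d}}=1}f(\zeta_1,\ldots,\zeta_d).$$
Since $f$ has integer coefficients, $f(\bar\zeta_1,\ldots,\bar\zeta_d)=\overline{f(\zeta_1,\ldots,\zeta_d)}$, and each set of $p^{n_i}$-th roots of unity is closed under complex conjugation.

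Next, I would group the tuples $(\zeta_1,\ldots,\zeta_d)$ into orbits of the componentwise conjugation $\zeta_i\mapsto\bar\zeta_i$. Every non-self-conjugate orbit has length two and contributes $|f(\zeta_1,\ldots,\zeta_d)|^2$, which is strictly positive by the standing hypothesis $r_{n_1,\ldots,n_d}(f)\neq 0$, which forces every factor of the product to be nonzero. Consequently, $\mathrm{sgn}(r_{n_1,\ldots,n_d}(f))$ equals the sign of the subproduct over self-conjugate tuples, i.e.\ those with $\zeta_i\in\{1,-1\}$ for every $i$.

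Finally, I would split on $p$. If $p$ is odd, then $-1$ is not a $p^{n_i}$-th root of unity, so the only self-conjugate tuple is $(1,\ldots,1)$, and hence $\mathrm{sgn}(r_{n_1,\ldots,n_d}(f))=\mathrm{sgn}(f(1,\ldots,1))$. If $p=2$ and each $n_i\geq 1$, then both $\pm 1$ are $2^{n_i}$-th roots of unity, the self-conjugate tuples form exactly $\{\pm 1\}^d$, and their product is
$$\prod_{\epsilon_1,\ldots,\epsilon_d\in\{\pm 1\}}f(\epsilon_1,\ldots,\epsilon_d)\;=\;r_{1,\ldots,1}(f)$$
by the same product formula applied at $n_i=1$, yielding $\mathrm{sgn}(r_{n_1,\ldots,n_d}(f))=\mathrm{sgn}(r_{1,\ldots,1}(f))$.

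I do not anticipate a serious obstacle: once the product expansion is in hand, the entire argument reduces to the standard complex-conjugation pairing principle. The only item that needs attention is ensuring that no stray sign $(-1)^{mn}$ enters the iterated product expansion, which is automatic because each $t_i^{p^{n_i}}-1$ is monic.
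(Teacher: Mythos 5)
Your proof is correct, and it reaches the conclusion by a somewhat different route than the paper. The paper proceeds inductively on the indices: it peels off one level at a time via the identity $r_{n_1,\ldots,n_i,\ldots,n_d}(f)=\N_{\Q(\zeta_{p^{n_i}})/\Q}\bigl(g_{n_1,\ldots,n_{i-1},n_{i+1},\ldots,n_d}(f)(\zeta_{p^{n_i}})\bigr)\,r_{n_1,\ldots,n_i-1,\ldots,n_d}(f)$ and observes that the norm from a totally imaginary field is positive, so the sign is unchanged until one reaches the base case $\Res(t_i-1,\cdot)$ for odd $p$, or $r_{1,\ldots,1}(f)$ for $p=2$ (where $\Q(\zeta_2)=\Q$ is the one non-CM layer). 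You instead expand everything at once into the full product $\prod f(\zeta_1,\ldots,\zeta_d)$ and pair tuples under componentwise conjugation, so the sign is carried entirely by the self-conjugate tuples, namely those in $\{\pm1\}^d$. The two arguments rest on the same mechanism -- positivity of norms from totally imaginary fields is exactly conjugation pairing -- but yours is more elementary (no norm maps or CM fields) and makes the dichotomy at $p=2$ transparent, since $-1$ is the only nontrivial real root of unity; it also adapts verbatim to the variant $r'_{n_1,\ldots,n_d}$ used in \cref{sublink_sgn_rem}, because deleting the tuples with some $\zeta_i=1$ preserves closure under conjugation. Your bookkeeping is sound: monicity of $t^{p^{n_i}}-1$ kills the stray $(-1)^{mn}$, the hypothesis $r_{n_1,\ldots,n_d}(f)\neq0$ forces every factor (in particular $f(1,\ldots,1)$, resp.\ $r_{1,\ldots,1}(f)$) to be nonzero, and the identification of the self-conjugate subproduct with $r_{1,\ldots,1}(f)$ for $p=2$ is just the same product formula at $n_i=1$.
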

\begin{proof}
    Fix $n_1,...,n_d$.
    As in the proof of \cref{multi_p_conv}, we have
    \begin{align*}
        r_{n_1,...,n_d}(f)=\N_{\Q(\zeta_{p^{n_i}})/\Q}(g_{n_1,...,n_{n-1},n_{i+1},...,n_d}(f)(\zeta_{p^{n_i}}))r_{n_1,...,n_{i}-1,...,n_d}(f).
    \end{align*}
    If $\Q(\zeta_{p^{n_i}})$ is a totally imaginary field, then the norm is positive
    and we have $\sgn(r_{n_1,...,n_d}(f))=\sgn(r_{n_1,...,n_{i}-1,...,n_d}(f))$.
    We see that $\Q(\zeta_{p^{n}})$ is a totally imaginary field except for $(p,n)=(2,1)$.
    Thus the signature is determined by $\Res(t_1-1,...,\Res(t_d-1,f))=f(1,...,1)$ for $p\neq2$ and $r_{1,...,1}(f)$ for $p=2$.
\end{proof}

Here we give an example of the $p$-adic limits for somewhat general polynomials.

\begin{example}\label{example}
    Let $g \in \Z[t_2,...,t_d]$ and $f=at_1^n+g \in \Z[t_1,...,t_d]$.
    We assume that $a\neq0$ and $n\geq1$.
    Then we have
    \begin{align*}
        \lim_{n_1,...,n_d\to\infty}r_{n_1,...,n_d}(f)=
        \begin{cases}
            0 & \text{if }p\mid f(1,...,1) \\
            \omega_p(f(1,...,1)) & \text{if }p\nmid f(1,...,1) \text{ and } p\neq2 \\
            \omega_p(g(1,...,1)-a) & \text{if }p\nmid f(1,...,1) \text{ and } p=2,
        \end{cases}
    \end{align*}
    where $\omega_p(m)$ denote the Teichm\"{u}ller character of $m$ for $p$.
\end{example}
\begin{proof}
    We suppose that $n_1>n$.
    Then we have
    \begin{align}\label{factorizationEq}
        \prod_{\zeta^{p^{n_1}}=1}(t-\zeta^n)=\left(t^{p^{n_1-v_p(n)}}-1\right)^{v_p(n)},
    \end{align}
    where $v_p$ denote the $p$-adic valuation.
    We also have
    \begin{align*}
        \Res(t_1^{p^{n_1}}-1,f)&=\prod_{\zeta_1^{p^{n_1}}=1}(g+a\zeta_1^n) \\
        &=(-a)^{p^{n_1}}\prod_{\zeta_1^{p^{n_1}}=1}(-g/a-\zeta_1^n).
    \end{align*}
    By using (\ref{factorizationEq}), we obtain
    \begin{align*}
        \Res(t_1^{p^{n_1}}-1,f)&=(-a)^{p^{n_1}}\prod_{\zeta_1^{p^{n_1}}=1}((-g/a)^{p^{n_1-v_p(n)}}-1)^{p^{v_p(n)}} \\
        &=((g)^{p^{n_1-v_p(n)}}-(-a)^{p^{n_1-v_p(n)}})^{p^{v_p(n)}}.
    \end{align*}
    Thus we have
    \begin{align*}
        r_{n_1,...,n_d}(f)=\prod_{\zeta_2^{p^{n_2}}=1}\cdots\prod_{\zeta_d^{p^{n_d}}=1}(g(\zeta_2,...,\zeta_d)^{p^{n_1-v_p(n)}}\mp a^{p^{n_1-v_p(n)}})^{p^{v_p(n)}},
    \end{align*}
    where the signature $\mp$ is $-$ if $p=2$ and $+$ if $p$ is an odd prime number.
    By (1) of \cref{multi_0_conditins}, we see that $|g(\zeta_2,...,\zeta_d)-g(1,...,1)|_p<1$ and we have
    \begin{align*}
        \lim_{n_1\to\infty}g(\zeta_2,...,\zeta_d)^{p^{n_1}}&=\lim_{n_1\to\infty}g(1,...,1)^{p^{n_1}} \\
        &=\omega_p(g(1,...,1)).
    \end{align*}
    Then we obtain
    \begin{align*}
        \lim_{n_1,..,n_d\to\infty}r_{n_1,...,n_d}(f)&=\lim_{n_2,...,n_d\to\infty}\prod_{\zeta_2^{p^{n_2}}=1}\cdots\prod_{\zeta_d^{p^{n_d}}=1}(\omega_p(g(1,...,1))\mp \omega_p(a))^{p^{v_p(n)}} \\
        &=\lim_{n_2,...,n_d\to\infty}(\omega_p(g(1,...,1))\mp \omega_p(a))^{p^{n_2+\cdots+n_d+v_p(n)}}.
    \end{align*}
    Since $\omega_p(\omega_p(g(1,...,1))\mp \omega_p(a))=\omega_p(g(1,...,1)\mp a)$ the assertion follows.
\end{proof}

\section{The $p$-adic limits of the sizes of the homology groups}\label{link_homology}
Let $M$ be a closed integral homology $3$-sphere and $L=l_1\cup...\cup l_d\subset M$ be a tame link with $d$ components.
Let $\Delta_L(t_1,...,t_d)$ denote the multivariable Alexander polynomial of $L$.
The size of the first homology group of an abelian covering of $M$ branched along $L$ can be calculated using iterated cyclic resultants of the Alexander polynomials of sublinks of $L$.
This section gives the $p$-adic convergence of the sizes of the first homology groups in $\Z_p^d$-coverings as an application of \cref{multi_p_conv} and \cref{sub_prod_seq}.
We also provide concrete examples for $\Z_p^2$-coverings branched along the twisted Whitehead links.

\subsection{The formula of the first homology groups}
We recall the construction of abelian branched coverings of $(M,L)$.
Let $G$ be a finite abelian group and $\pi:\pi_1(M-L)\to G$ be a surjective homomorphism.
Let $M_{\pi}\to M$ denote the branched cover corresponding to $\pi$, that is, the Fox completion of the unbranched covering corresponding to $\ker\pi$.
Then the transformation group of $M_{\pi}\to M$ is $G$.
Note that any finite abelian cover of $M$ branched along $L$ is obtained in this way.

We also recall the formula for the size of the first homology groups of abelian branched coverings given by Mayberry--Murasugi and Porti.
Let $\hat{G}$ denote the set of homomorphisms from $G$ to $\C^*$.
We choose meridians $m_1,...,m_d \in \pi_1(M-L;\Z)$.
For $\xi \in \hat{G}$, we define a sublink $L_{\xi}$ of $L$ as
\begin{align*}
    L_{\xi}=\cup_{\xi(\pi(m_i))\neq1}l_i.
\end{align*}
We write $L_{\xi}=l_{i_1}\cup \cdots l_{i_k}$.
For the trivial homomorphism $1\in\hat{G}$, we define $L_{1}=\emptyset$ and $\Delta_{L_1}=1$.
We also set
\begin{align*}
    \hat{G}^{(1)}=\{\xi \in \hat{G}\mid L_{\xi} \text{ has a single component}\}.
\end{align*}
We write $i(\xi)$ for the index of the meridian corresponding to $\xi\in\hat{G}^{(1)}$.
For a group $H$, let $|H|$ be the size of $H$ if $H$ is finite and $0$ if $H$ is infinite.
Then Mayberry--Murasugi and Porti gave the following theorem.
\begin{theorem}[{\cite{Mayberry-Murasugi}*{Theorem 10.1}, \cite{Porti2004}*{Theorem 1.1}}]\label{porti}
    \begin{align}
        |H_1(M_{\pi};\Z)|=\frac{|G|}{\prod_{\xi \in \hat{G}^{(1)}}|1-\xi(\pi(m_{i(\xi)}))|}\prod_{\xi \in \hat{G}}|\Delta_{L_{\xi}}(\xi(\pi(m_{i_1})),...,\xi(\pi(m_{i_k})))|.
    \end{align}
\end{theorem}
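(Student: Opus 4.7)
The plan is to compute $|H_1(M_\pi;\Z)|$ via a character decomposition over $G$, reducing everything to twisted homology computations on $M-L$ where the Alexander polynomial appears through Fox calculus. First I would pass from the branched cover $M_\pi$ to the unbranched covering $X_\pi \to M-L$ with deck group $G$. A Mayer--Vietoris decomposition $M_\pi = X_\pi \cup N$, where $N$ is a disjoint union of regular neighborhoods of the components of the preimage of $L$, yields an exact sequence relating $H_1(M_\pi;\C)$ to $H_1(X_\pi;\C)$ together with correction terms coming from the branch locus tubes; the sizes of the latter will ultimately produce the $|1-\xi(\pi(m_{i(\xi)}))|$ factors.

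Next I would invoke the $G$-action to decompose $H_*(X_\pi;\C) = \bigoplus_{\xi\in\hat G}H_*(X_\pi;\C)_\xi$ into $\xi$-isotypic components, so that $|H_1(M_\pi;\Z)|$ can be written as a product over $\xi\in\hat G$ of contributions from each isotypic piece (modulo the branching correction). The standard identification gives $H_*(X_\pi;\C)_\xi \cong H_*(M-L;\C_{\xi\circ\pi})$, the twisted homology with local system $\C_{\xi\circ\pi}$. By the classical Fox-calculus/Reidemeister-torsion formula, for a character $\chi$ which is nontrivial on every meridian the multivariable Alexander polynomial evaluates the order of this twisted homology, namely $|\Delta_L(\chi(m_1),\ldots,\chi(m_d))|$ up to trivial factors.

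The remaining step is to understand what happens when $\xi\circ\pi$ kills some meridians. If $\xi\circ\pi(m_i)=1$ for $i\notin\{i_1,\ldots,i_k\}$, then the local system descends to the complement of the sublink $L_\xi = l_{i_1}\cup\cdots\cup l_{i_k}$, and the twisted homology computation replaces $\Delta_L$ with $\Delta_{L_\xi}$ evaluated at $(\xi(\pi(m_{i_1})),\ldots,\xi(\pi(m_{i_k})))$. The case when $L_\xi$ is empty (i.e.\ $\xi$ trivial) contributes $|G|$ in the numerator coming from a zeroth twisted cohomology, while the case $|L_\xi|=1$ is precisely where one must adjust by a factor $|1-\xi(\pi(m_{i(\xi)}))|$ to compensate for the normalization mismatch between the Alexander polynomial of a knot and the order of its Alexander module (whose presentation differs by a $(1-t)$-torsion factor).

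The hard part is the careful accounting in step three: the Alexander invariant of a knot is the order of $H_1$ of the infinite cyclic cover, but a $\C$-character contribution corresponds to the order of a cyclic-module quotient, and the mismatch is exactly the $(1-\chi(m))$ factor that appears for each one-component sublink. Keeping track of this, together with the $|G|/\prod$ normalization from the zero-dimensional pieces of the twisted chain complex, requires a uniform torsion-theoretic formalism (Turaev's Reidemeister torsion of a link exterior and its branched covers would be my preferred tool). Once the bookkeeping is organized this way, the product formula in \cref{porti} emerges by multiplying the $\xi$-isotypic contributions and dividing by the branching correction.
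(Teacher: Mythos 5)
The paper does not prove \cref{porti} at all: it is imported verbatim from Mayberry--Murasugi and Porti, so there is no in-paper argument to measure your sketch against. Judged on its own, your outline follows the standard torsion-theoretic route (essentially Porti's proof, which is built on Reidemeister--Turaev torsion of the link exterior), and you correctly identify the two delicate normalizations: the $(1-\xi(\pi(m_{i(\xi)})))$ factor for one-component sublinks, coming from the discrepancy between the Alexander polynomial and the order of the Alexander module, and the $|G|$ in the numerator attached to the trivial character.

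There is, however, a genuine gap in the way you set up the character decomposition. You propose to decompose $H_*(X_\pi;\C)$ into $\xi$-isotypic pieces and read off $|H_1(M_\pi;\Z)|$ as a product of their ``contributions.'' But $|H_1(M_\pi;\Z)|$ is the order of a finite group, and when that group is finite the complex homology $H_1(M_\pi;\C)$ vanishes identically, so the isotypic decomposition of $\C$-homology carries no information about the order you want. The decomposition that actually works is at the chain level: one takes a square presentation matrix of $H_1$ over $\Z[G]$ (equivalently, computes the torsion of the $\Z[G]$-chain complex of the pair), base-changes along $\C[G]\cong\prod_{\xi\in\hat G}\C$, and uses the fact that the determinant over $\Z$ equals the product over $\xi$ of the determinants of the $\xi$-components; each of those determinants is then identified with $\Delta_{L_\xi}$ evaluated at $\xi\circ\pi$ of the meridians via Fox calculus or the torsion of the exterior. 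You gesture at exactly this in your final paragraph (``Turaev's Reidemeister torsion \dots would be my preferred tool''), but it must be the primary mechanism of the proof rather than a bookkeeping afterthought, since the naive isotypic decomposition of homology with field coefficients cannot produce the integer $|H_1(M_\pi;\Z)|$. Relatedly, the $|G|$ in the numerator is not a ``zeroth twisted cohomology'' contribution of the trivial character in any direct sense; it arises from the torsion of the trivial-character summand of the chain complex (the chain complex of the base), and deriving it requires the same determinant-level argument. With the argument reorganized around the torsion of the $\Z[G]$-complex, your outline matches the known proofs.
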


\subsection{$\Z_p^d$-coverings}\label{Zpdcovering}
Let $\pi:\pi_1(M-L)\to \Z_p^d$ be the composition map of the abelianization $\pi_1(M-L) \to \pi_1(M-L)^{ab}$ ($\cong \Z^d$) and the inclusion $\Z^d \hookrightarrow \Z_p^d$.
For a finite index subgroup $\Gamma \subset \Z_p^d$ let $\pi_{\Gamma}:\pi_1(M-L)\to \Z_p^d/\Gamma$ denote the composition of $\pi$ and the canonical surjective map $\Z_p^d \to \Z_p^d/\Gamma$.
Let $M_{\Gamma}\to M$ denote the branched covering of $(M,L)$ corresponding to $\pi_{\Gamma}$. 
\begin{definition}\label{defiOfZpd}
    A branched $\Z_p^d$-covering of $(M,L)$ is the inverse system of branched coverings $M_{\Gamma} \to M$ where $\Gamma$ runs finite index open subgroups in $\Z_p^{\,d}$.
\end{definition}
We obtain the $p$-adic convergence of the non-$p$-parts of the sizes of the first homology groups in branched $\Z_p^d$-covering of $(M,L)$.
For a $\Z_p$-module $A$ let $\rank A=\dim_{\F_p}A\otimes \F_p$.
\begin{lemma}\label{keyLemForHomConv}
    Let $\Gamma_1\supset \Gamma_2 \supset \cdots$ be a descending sequence of finite open subgroups such that $\cap_n\Gamma_n=\{0\}$ in $\Z_p^{\,d}$.
    Then we have
    \begin{itemize}
        \item[(1)] for all sufficiently large $n$, $\rank \Z_p^{\,d}/\Gamma_n=d$.
        \item[(2)] for each meridian $m$ of $L$, $\lim_{n\to\infty}\ord(\pi_{\Gamma_n}(m))=\infty$.
    \end{itemize}
\end{lemma}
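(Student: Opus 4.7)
The plan is to deduce both parts from a single compactness observation: \emph{for every open neighborhood $U$ of $0$ in $\Z_p^{\,d}$, there exists $N$ such that $\Gamma_n\subset U$ for all $n\geq N$}. To see this, note that each $\Gamma_n$ is closed in $\Z_p^{\,d}$ (as a finite-index open subgroup, its complement is a finite union of open cosets); hence the nested closed sets $\Gamma_n\cap(\Z_p^{\,d}\setminus U)$ lie in the compact set $\Z_p^{\,d}\setminus U$ and have intersection $(\bigcap_n\Gamma_n)\cap(\Z_p^{\,d}\setminus U)=\emptyset$, so by the finite intersection property some $\Gamma_N\cap(\Z_p^{\,d}\setminus U)$ is already empty.

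For part (1), I would apply the observation with $U=p\Z_p^{\,d}$. Once $\Gamma_n\subset p\Z_p^{\,d}$, the canonical surjection factors as $\Z_p^{\,d}\twoheadrightarrow \Z_p^{\,d}/\Gamma_n\twoheadrightarrow \Z_p^{\,d}/p\Z_p^{\,d}\cong \F_p^{\,d}$, so tensoring with $\F_p$ shows $\rank(\Z_p^{\,d}/\Gamma_n)\geq d$; the reverse inequality is immediate because $\Z_p^{\,d}/\Gamma_n$ is generated by the images of the $d$ standard generators of $\Z_p^{\,d}$.

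For part (2), I would first recall that $H_1(M\setminus L;\Z)\cong \Z^{\,d}$ is freely generated by the meridians $m_1,\ldots,m_d$ (by Alexander duality in the integral homology $3$-sphere $M$), so $\pi(m)$ is, up to sign, a standard basis vector of $\Z^{\,d}\subset \Z_p^{\,d}$; in particular $k\pi(m)\neq 0$ for every $k\in\Z_{>0}$. Given $K\in\Z_{>0}$, the finite set $S_K=\{k\pi(m):1\leq k\leq K\}$ is closed in the Hausdorff space $\Z_p^{\,d}$ and avoids $0$, so $U=\Z_p^{\,d}\setminus S_K$ is an open neighborhood of $0$; applying the observation, $\Gamma_n\cap S_K=\emptyset$ for all large $n$, whence $\ord(\pi_{\Gamma_n}(m))>K$. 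Since $K$ was arbitrary, $\ord(\pi_{\Gamma_n}(m))\to\infty$.

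There is no real obstacle here: both statements are soft consequences of compactness of $\Z_p^{\,d}$ together with the standard identification of $H_1(M\setminus L;\Z)$ as the free abelian group on the meridians. The only point requiring a bit of care is to use that $\Gamma_n$ is simultaneously open (giving finiteness of the index) and closed (enabling the finite intersection property argument).
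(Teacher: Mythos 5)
Your proof is correct, and it takes a genuinely different (and arguably cleaner) route than the paper's. The paper works algebraically with the inverse limit: from $\cap_n\Gamma_n=\{0\}$ it deduces $\varprojlim_n \Z_p^{\,d}/\Gamma_n\cong\Z_p^{\,d}$, proves (1) by a contradiction argument tracking the kernels of the induced maps $\Z_p^{\,d}\otimes\F_p\to G_n\otimes\F_p$ and lifting a nonzero element of their intersection back to $\cap_n\Gamma_n$, and proves (2) by observing that a bounded-order meridian would give a nonzero torsion element of $\varprojlim_n G_n\cong\Z_p^{\,d}$, contradicting torsion-freeness. You instead isolate the single point-set statement that the nested closed subgroups $\Gamma_n$ must eventually enter any open neighborhood of $0$ (finite intersection property in the compact space $\Z_p^{\,d}$), and then both parts fall out: (1) from $\Gamma_n\subset p\Z_p^{\,d}$ giving a surjection $\Z_p^{\,d}/\Gamma_n\twoheadrightarrow\F_p^{\,d}$, and (2) from $\Gamma_n$ avoiding the finite set $\{k\pi(m):1\le k\le K\}$. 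Both arguments ultimately rest on the same two inputs --- $\cap_n\Gamma_n=\{0\}$ together with compactness/completeness of $\Z_p^{\,d}$, and the fact that $\pi(m)$ is a basis vector of $\Z^{\,d}\subset\Z_p^{\,d}$ (which the paper also uses, implicitly, when deriving its contradiction in (2)) --- but your topological packaging avoids the tensor-product bookkeeping of the paper's proof of (1) and makes the uniformity over $n$ transparent. The one point worth stating explicitly, which you do, is that each $\Gamma_n$ is closed as well as open; that is what licenses the finite intersection property argument.
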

\begin{proof}
    We put $G_n=\Z_p^{\,d}/\Gamma_n$.

    (1) We note that $\varprojlim_nG_n\cong\Z_p^{\,d}$ by the assumption $\cap_n\Gamma_n=\{0\}$.
    Since the canonical homomorphisms $F_n:\Z_p^{\,d}\to G_n$ and $f_{n/n-1}:G_n \to G_{n-1}$ are surjective the induced homomorphisms
    $\tilde F_n:\Z_p^{\,d}\otimes \F_p \to G_n\otimes \F_p$ and $\tilde f_{n/n-1}:G_n\otimes \F_p \to G_{n-1}\otimes \F_p$ are also surjective.
    Thus we have that the sequence $(\rank G_n)$ is monotonically increasing.
    We suppose that $\rank G_n<d$ for all $n>0$.
    Then we have that $\dim_{\F_p}\ker \tilde F_n \geq 1$ for all $n$.
    Since $\ker \tilde F_n\subset \ker \tilde F_{n-1}$ for all $n>1$, we see that $\dim_{\F_n}\cap_n\ker \tilde F_n\geq1$.
    Thus there exists a non zero element $\sum_i a_i\otimes b_i \in \cap_n\ker \tilde F_n$.
    We take $\beta_i\in\Z_p$ such that $\beta_i \equiv b_i \mod p$ for each $i$.
    Then we have that $\sum_i a_i\otimes b_i=\sum_i \beta_ia_i\otimes 1=(\sum_i\beta_ia_i)\otimes 1$, and we obtain $\sum_i\beta_ia_i\in \cap_n\ker F_n=\cap_n\Gamma_n$.
    This is a contradiction.
    
    (2) We note that the sequence $(\ord(\pi_{\Gamma_n}(m)))$ is monotonically increasing.
    We suppose that there exists $N>$ such that $\ord(\pi_{\Gamma_n}(m))<N$ for all $n>0$.
    Then the element $(\pi_{\Gamma_n}(m))_n$ in $\varprojlim_n\Z_p^{\,d}/\Gamma_n$ is torsion.
    This contradicts the fact that $\varprojlim_n\Z_p^{\,d}/\Gamma_n\cong\Z_p^{\,d}$.
\end{proof}
For $f\in\Z[t_1,...,t_d]$ and positive integers $n_1,...,n_d$, we set
\begin{align*}
    r'_{n_1,...,n_d}(f)=\Res\left(\frac{t_1^{p^{n_1}}-1}{t_1-1},...,\Res\left(\frac{t_d^{p^{n_d}}-1}{t_d-1},f\right)\right).
\end{align*}
\begin{theorem}\label{convergenceHom}
    Let $M$ be a closed integral homology $3$-sphere and $L$ be a link in $M$.
    Suppose that for each finite index open subgroup $\Gamma\subset \Z_p^{\,d}$, $M_{\Gamma}$ is a rational homology $3$-sphere, that is, $H_1(M_{\Gamma};\Z)$ is a finite group.
    Then for each descending sequence of finite open subgroups $\Gamma_1\supset\Gamma_2\supset \cdots$ in $\Z_p^{\,d}$ such that $\cap_{n}\Gamma_n=\{0\}$, the non-$p$-parts of $|H_1(M_{\Gamma_n};\Z)|$ converge in $\Z_p$.
    Moreover, the limit value does not depend on the choice of a sequence of finite open subgroups.
\end{theorem}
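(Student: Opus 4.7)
The plan is to combine the Porti--Mayberry--Murasugi formula (\cref{porti}) with the $p$-adic convergence results for iterated cyclic resultants from \cref{sub_prod_seq}. Since $M$ is an integral homology $3$-sphere, the meridians $m_1,\ldots,m_d$ form a $\Z$-basis of $\pi_1(M-L)^{ab}\cong\Z^d$, hence under $\pi$ a $\Z_p$-basis of $\Z_p^{\,d}$; Pontryagin duality then identifies $\hat{G}_n$ with the annihilator $\Gamma_n^\perp\subset\mu_{p^\infty}^{\,d}$.

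I would first handle the cofinal family of ``box'' subgroups $\Gamma_\mathbf{a}:=\bigoplus_{i=1}^d p^{a_i}\Z_p$. Here $\hat{G}_\mathbf{a}\cong\mu_{p^{a_1}}\times\cdots\times\mu_{p^{a_d}}$ and a character $\xi=(\zeta_1,\ldots,\zeta_d)$ satisfies $\xi(\pi(m_i))=\zeta_i$ and $L_\xi=\bigcup_{\zeta_i\neq 1}l_i$. The identity $\prod_{\zeta\neq 1,\,\zeta^{p^a}=1}(1-\zeta)=p^a$ cancels the prefactor $|G_\mathbf{a}|/\prod_{\xi\in\hat{G}_\mathbf{a}^{(1)}}|1-\xi(\pi(m_{i(\xi)}))|$ exactly, and grouping the remaining product by $S=\{i:\zeta_i\neq 1\}$ yields
\begin{align*}
    |H_1(M_{\Gamma_\mathbf{a}};\Z)|=\prod_{\emptyset\neq S\subseteq\{1,\ldots,d\}}\bigl|r'_{(a_i)_{i\in S}}(\Delta_{L_S})\bigr|,
\end{align*}
where $r'_{(a_i)_{i\in S}}(\Delta_{L_S})$ is the iterated resultant against $(t_i^{p^{a_i}}-1)/(t_i-1)$ for $i\in S$. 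Then \cref{sub_prod_seq} gives convergence of each non-$p$-part in $\Z_p$ as $\mathbf{a}\to\infty$, \cref{multi_sequentially_limit} makes the multiple limit order-independent, and a sign-stability argument analogous to \cref{multi_sgn_prop} handles signs, producing a well-defined limit $\mathcal{L}\in\Z_p$.

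For an arbitrary descending sequence $(\Gamma_n)$ with $\bigcap_n\Gamma_n=\{0\}$, compactness of $\Z_p^{\,d}$ and openness of each $\Gamma_n$ force $\Gamma_n\subseteq(p^{k_n}\Z_p)^d$ for some $k_n\to\infty$, so the box family is cofinal with $(\Gamma_n)$. Applying Smith normal form to $\Gamma_n\subset\Z_p^{\,d}$ and using \cref{keyLemForHomConv}(1), one obtains $\Gamma_n=\bigoplus_i p^{a_i(n)}e_i'\Z_p$ in some $\Z_p$-basis $\{e_i'\}$ of $\Z_p^{\,d}$, with $\min_i a_i(n)\geq k_n\to\infty$ (since no $\Z_p$-basis vector can be divisible by $p$). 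In this basis the meridians become $\Z$-linear combinations $m_j=\sum_ic_{ij}e_i'$, so $\xi(\pi(m_j))=\prod_i\eta_i^{c_{ij}}$ for $\xi=(\eta_1,\ldots,\eta_d)\in\hat{G}_n$, and Porti's formula again writes $|H_1(M_{\Gamma_n};\Z)|$ as a product of iterated resultants, now of Alexander polynomials composed with the monomial substitutions above. The Cauchy estimate $r_{N_1,\ldots,N_d}(f)\equiv r_{n_1,\ldots,n_d}(f)\pmod{p^{\min n_i}}$ from the proof of \cref{multi_p_conv} then shows $|H_1(M_{\Gamma_n};\Z)|_{\text{non-}p}$ is $p$-adic Cauchy, and cofinality with the box family identifies its limit with $\mathcal{L}$.

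The main obstacle I foresee is justifying convergence after the monomial substitution $t_i\mapsto\prod_j t_j^{c_{ji}}$, since \cref{multi_p_conv} and \cref{sub_prod_seq} as stated apply to polynomials in $\Z[t_1,\ldots,t_d]$ while the substitution produces Laurent polynomials. I would resolve this by rerunning the Artin-reciprocity argument in the proof of \cref{multi_p_conv} directly for the substituted Laurent polynomial: its evaluations at $p$-power roots of unity still lie in cyclotomic $p$-extensions, so the norm-triviality modulo $p^n$ still holds. An alternative that avoids the substitution entirely is to sandwich each $\Gamma_n$ between two box subgroups via cofinality and deduce Cauchy convergence in $\Z_p$ by a three-way comparison using only the box formula already established.
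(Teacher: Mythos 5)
Your reduction to the ``box'' subgroups $\Gamma_{\mathbf a}=\bigoplus_i p^{a_i}\Z_p$ is correct and is essentially the computation the paper performs (cf.\ \cref{multipleRem}): the meridians map to the standard basis because $M$ is an integral homology sphere, the prefactor in \cref{porti} equals $1$, and grouping characters by the sublink $L_\xi$ yields the product of $r'$-values to which \cref{sub_prod_seq} applies. The genuine gap is in the passage from this cofinal family to an arbitrary descending sequence $(\Gamma_n)$, and neither of your proposed fixes closes it. After putting $\Gamma_n$ in Smith normal form, the coefficients expressing the meridians in the new basis lie in $\Z_p$ (not $\Z$) and, more importantly, they depend on $n$; so the ``substituted Alexander polynomial'' is a different Laurent polynomial $f_n$ at each level. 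The congruence $r_{N_1,\ldots,N_d}(f)\equiv r_{n_1,\ldots,n_d}(f)\bmod p^{\min_i n_i}$ from the proof of \cref{multi_p_conv} compares two levels of the sequence attached to one fixed $f$; it says nothing about $r_{\mathbf a(n)}(f_n)$ versus $r_{\mathbf a(n+1)}(f_{n+1})$ when $f_n\neq f_{n+1}$, so rerunning the Artin reciprocity argument polynomial-by-polynomial does not make your sequence Cauchy. There is also a second problem you do not flag: after the substitution the condition $\xi(\pi(m_j))\neq1$ becomes $\prod_i\eta_i^{c_{ij}}\neq1$, which is no longer a condition on the single coordinate $\eta_j$, so the strata $\hat G_n(L')$ do not factor as products of punctured cyclic groups and the product over $\hat G_n(L')$ is not an iterated resultant of the substituted polynomial. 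The sandwiching alternative fails as well: $\Gamma_{\mathbf a}\supset\Gamma_n\supset\Gamma_{\mathbf b}$ gives no $p$-adic congruence between $|H_1(M_{\Gamma_n};\Z)|$ and the two box values, since $p$-adic closeness is not monotone under refinement of covers.

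For contrast, the paper avoids the change of basis entirely: it keeps the meridian coordinates, shows via \cref{keyLemForHomConv}~(2) that the orders $\ord(\pi_{\Gamma_n}(m_i))$ tend to infinity, and identifies the product over $\hat G_n(L')$ directly with an $r'$-value of the \emph{original} polynomial $\Delta_{L'}$ at levels $N_n(m)=v_p(\ord(\pi_{\Gamma_n}(m)))$, so that only the levels of a fixed polynomial's $r'$-sequence vary with $n$ and \cref{sub_prod_seq} applies verbatim. If you want to keep your two-step structure, the missing ingredient is precisely a statement of this kind relating the character sum for a general quotient $G_n$ to $r'$-values of the unsubstituted Alexander polynomials; without it, the theorem is only proved for the box family.
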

\begin{proof}
    We fix a descending sequence of finite index open subgroups $\Gamma_1\supset\Gamma_2\supset \cdots$ in $\Z_p^{\,d}$ such that $\cap_{n}\Gamma_n=\{0\}$.
    We put $G_n=\Z_p^{\,d}/\Gamma_n$.
    By using \cref{keyLemForHomConv} (1) we have $\rank G_n=d$ for all sufficiently large $n$.
    We fix such $n$.
    Then there exists a tuple of positive integers $(n_1,...,n_d)$ such that $G_n\cong \oplus_i \Z/p^{n_i}\Z$.
    Thus the size of the first homology group $|H_1(M_{\Gamma_n};\Z)|$ is
    \begin{equation}\label{nthLayerH1Size}
        \frac{p^{n_1+\cdots+n_d}}{\prod_{\xi \in \hat{G}_n^{(1)}}|1-\xi(\pi_{\Gamma_n}(m_{i(\xi)}))|}\prod_{\xi \in \hat{G}_n}|\Delta_{L_{\xi}}(\xi(\pi_{\Gamma_n}(m_{i_1})),...,\xi(\pi_{\Gamma_n}(m_{i_k})))|
    \end{equation}
    by \cref{porti}.
    We note that the fraction part of (\ref{nthLayerH1Size}) is $p$-power integer, thus it does not need to be taken into account for non-$p$ part calculation.
    Thus we have
    \begin{equation}\label{nthLayerHom}
        |H_1(M_{\Gamma_n};\Z)_{\text{non-}p}|=\prod_{\xi \in \hat{G}_n}|\Delta_{L_{\xi}}(\xi(\pi_{\Gamma_n}(m_{i_1})),...,\xi(\pi_{\Gamma_n}(m_{i_k})))|_{\text{non-}p}.
    \end{equation}
    For a sublink $L'$ in $L$ let $d(L')$ denote the number of components of $L'$ and let $i(L')_1,...,i(L')_{d(L')}\in\{1,...,d\}$ denote the indices corresponding the meridians of $L'$.
    We set $\hat{G}_n(L')=\{\xi\in\hat{G}_n\mid L_{\xi}=L'\}$.
    Then the right hand side of (\ref{nthLayerHom}) is
    \begin{align*}
        \prod_{L'\subset L}\prod_{\xi \in \hat{G}_n(L')}|\Delta_{L'}(\xi(\pi_{\Gamma_n}(m_{i(L')_1})),...,\xi(\pi_{\Gamma_n}(m_{i(L')_{d(L')}})))|_{\text{non-}p}.
    \end{align*}
    Here we note that, for each $m_{i(L')_k}$,  $\xi(\pi_{\Gamma_n}(m_{i(L')_k}))$ runs $\ord(\pi_{\Gamma_n}(m_{i(L')_k}))$-th roots of unities except for $1$.
    Thus, by putting $N_n(m)=v_p(\ord(\pi_{\Gamma_n}(m)))$ for each meridian $m$, we have
    \begin{align*}
        \prod_{\xi \in \hat{G}_n(L')}&|\Delta_{L'}(\xi(\pi_{\Gamma_n}(m_{i(L')_1})),...,\xi(\pi_{\Gamma_n}(m_{i(L')_{d(L')}})))| \\
        &=|r'_{N_n(m_{i(L')_1}),...,N_n(m_{i(L')_{d(L')}})}(\Delta_{L'}(t_1,...,t_{d(L')}))|.
    \end{align*}
    By using \cref{keyLemForHomConv} (2), we have $\lim_{n\to\infty}N_n(m)=\infty$.
    Therefore, the assertion follows from \cref{sub_prod_seq}.
    The independence of the limit from the choice of the sequence $(\Gamma_n)$ is obvious.
\end{proof}
\begin{remark}\label{multipleRem}
    Let $\Gamma_{n_1,...,n_d}=\oplus_i p^{n_i}\Z_p$ for each tuple of positive integers $(n_1,...,n_d)$.
    Then, by \cref{sub_prod_seq} and the proof of \cref{convergenceHom}, the multiple sequence of the non-$p$-parts of $|H_1(M_{\Gamma_{n_1,...,n_d}};\Z)|$ converges at $\Z_p$ and the limit value coincides with the one in \cref{convergenceHom}.
\end{remark}
\begin{remark}\label{sublink_sgn_rem}
    By the proof of \cref{multi_sgn_prop} we have
    \begin{align*}
        \sgn(r'_{n_1,...,n_d}(f))=
        \begin{cases}
            1 & \text{if }p\neq2, \\
            \sgn(f(-1,...,-1)) & \text{if }p=2.
        \end{cases}
    \end{align*}
\end{remark}
Let $h(M,L)$ denote the limit value in \cref{convergenceHom}.
\subsection{The $p$-adic torsion}
The $p$-adic limit value $h(M,L)$ in \cref{convergenceHom} coincides with the topological invariant, the (relative) $p$-adic torsion defined by Kinoke~\cite{Kionke2020}.
In this section, we show the coincidence.
For a module $M$, let $\text{tors}M$ denote the torsion part of $M$.
Let $X$ be a finite CW-complex and $\widetilde{X}$ denote the universal covering of $X$.
For a finite index open subgroup $\Gamma \subset \pi_1(X)$, $\widetilde{X}/\Gamma\to X$ denote the unbranched covering space corresponding to $\Gamma$ by Galois theory.
We note that $\widetilde{X}/\Gamma$ is a finite sheeted covering of $X$ whose covering transformation group is isomorphic to $\pi_1(X)/\Gamma$.
Let $G$ be a profinite group which has an open pro-$p$ subgroup.
A group homomorphism $\phi:\pi_1(X) \to G$ is called a {\it virtual pro-$p$ completion} of $\pi_1(X)$.
For a subset $A$ in $X$ and a commutative ring $R$ in which $p$ is invertible, the relative $p$-adic torsion is defined by
\[
    t_j^{[p]}(X,A;\phi,R)=\#_p^{G}(\text{tors}(\varinjlim_{N<G}H^j(\widetilde{X}/\phi^{-1}(N),A_N;R)))
\]
where $N$ run open normal subgroups of $G$, $A_N$ is the inverse image of $A$ by the covering map $\widetilde{X}/\phi^{-1}(N)\to X$, and $\#_p^{G}$ is the $p$-adic cardinality function which is defined in \cite[Section 4]{Kionke2020}.

\begin{proposition}\label{p_adic_torsion}
    We assume the same assumption as in \cref{convergenceHom}.
    Furthermore, we assume that $\Delta_L$ does not vanish on $p$-power-th roots of unities except for $(1,...,1)$.
    Let $X=E(L)$, $A=\partial E(L)$ be the boundary of $E(L)$, $R=\Z[1/p]$, and $\phi$ be the composition of the abelianization $\pi_1(E(L))\to \pi_1(E(L))^{ab}\cong \Z^d$ and the inclusion $\Z^d \subset \Z_p^d$.
    Then we have
    \[
        h(M,L)=t_2^{[p]}(E(L),\partial E(L);\phi,\Z[1/p]).
    \]
\end{proposition}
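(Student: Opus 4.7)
The plan is to express both $h(M,L)$ and $t_2^{[p]}\bigl(E(L),\partial E(L);\phi,\Z[1/p]\bigr)$ as the same $p$-adic limit of non-$p$-parts of orders of finite abelian groups. The key geometric inputs are Poincar\'e--Lefschetz duality for the unbranched covers of the link exterior and the Dehn-filling description of the branched cover; the analytic input is the formal behavior of Kionke's $\#_p^G$ on directed systems of finite $p'$-torsion modules.

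First I would identify, for each open normal subgroup $N$ of $G=\Z_p^{\,d}$, the cover $\widetilde{E(L)}/\phi^{-1}(N)$ with the unbranched finite cover $E(L)_{\Gamma_N}$ of $E(L)$ corresponding to $\Gamma_N:=\phi^{-1}(N)\subset \pi_1(E(L))$. Using cofinality (cf.\ \cref{multipleRem}), one may restrict the direct limit to the sequence of subgroups $\Gamma_{n_1,\ldots,n_d}=\bigoplus_i p^{n_i}\Z_p$. Because each $E(L)_{\Gamma_N}$ is a compact orientable $3$-manifold with toroidal boundary, Poincar\'e--Lefschetz duality, naturally in the covers, gives
\[
H^2\bigl(E(L)_{\Gamma_N},\partial E(L)_{\Gamma_N};\Z[1/p]\bigr) \cong H_1\bigl(E(L)_{\Gamma_N};\Z[1/p]\bigr),
\]
and the torsion part of the right-hand side coincides with the non-$p$-torsion of $H_1(E(L)_{\Gamma_N};\Z)$.

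Second, $M_{\Gamma_N}$ is obtained from $E(L)_{\Gamma_N}$ by Dehn filling along the preimages of the meridians, which yields a short exact sequence
\[
0 \to K_N \to H_1\bigl(E(L)_{\Gamma_N};\Z\bigr) \to H_1\bigl(M_{\Gamma_N};\Z\bigr) \to 0,
\]
with $K_N$ generated by the lifted meridians. Since $|H_1(M_{\Gamma_N};\Z)|<\infty$ by hypothesis, $K_N$ has full rank inside the free part of $H_1(E(L)_{\Gamma_N};\Z)$. The extra assumption that $\Delta_L$ does not vanish on $p$-power-th roots of unity away from $(1,\ldots,1)$ will force the index of $K_N$ in this free part to be a power of $p$, through a character-by-character analysis of the Alexander module along the lines of the Mayberry--Murasugi--Porti formula (\cref{porti}). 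Hence
\[
|\mathrm{tors}\,H_1(E(L)_{\Gamma_N};\Z)|_{\text{non-}p} = |H_1(M_{\Gamma_N};\Z)|_{\text{non-}p}.
\]

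Finally, Kionke's $p$-adic cardinality $\#_p^G$, applied to a directed system of finite abelian groups of order prime to $p$ indexed by the cofinal chain above, reduces by construction to the $p$-adic limit of the orders in $\Z_p$. Combining this with the previous step, \cref{convergenceHom}, and \cref{multipleRem} gives the desired equality. The main obstacle is the key claim in the second step: showing that the nondegeneracy hypothesis on $\Delta_L$ precisely cuts out the $p$-primary correction between $|\mathrm{tors}\,H_1(E(L)_{\Gamma_N};\Z)|_{\text{non-}p}$ and $|H_1(M_{\Gamma_N};\Z)|_{\text{non-}p}$. This requires a careful comparison of the Fox-calculus presentation of the Alexander module over $\Z_p^{\,d}$ with the branched-cover formula, the nondegeneracy assumption ruling out stray non-$p$-torsion inside $K_N$ coming from characters at which $\Delta_L$ vanishes. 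A secondary, lighter, point is verifying that Kionke's $\#_p^G$ on this system coincides with the na\"ive $p$-adic limit; for $G=\Z_p^{\,d}$ acting trivially on finite $p'$-torsion modules this is essentially the definition.
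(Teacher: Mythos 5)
Your overall architecture matches the paper's: both arguments reduce Kionke's invariant to $\lim_n|H^2(E(L)_n,\partial E(L)_n;\Z[1/p])|$ along the cofinal chain $\Gamma_n=(p^n\Z_p)^d$ (the paper cites Kionke's Theorem 5.11 for this reduction rather than treating it as definitional), pass through Lefschetz duality to $H_1(E(L)_n;\Z)$, and then must show
$|\text{tors}\,H_1(E(L)_n;\Z)|_{\text{non-}p}=|H_1(M_n;\Z)|_{\text{non-}p}$. The genuine gap is that this last comparison --- which you yourself label ``the main obstacle'' --- is precisely the mathematical content of the proposition, and your proposal does not supply it. You sketch a route via the Dehn-filling sequence $0\to K_N\to H_1(E(L)_{\Gamma_N};\Z)\to H_1(M_{\Gamma_N};\Z)\to 0$ and assert that the nondegeneracy of $\Delta_L$ forces the index of $K_N$ in the free quotient to be a $p$-power and rules out non-$p$-torsion inside $K_N$, but you give no argument for either claim, and the role you assign to the hypothesis on $\Delta_L$ is not the one it actually plays.

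The paper closes this gap concretely and in the opposite direction: by \cite[Theorem 2.1]{HartleyMurasugi1978} one has $\text{tors}\,H_1(E(L)_n;\Z)\cong H_1(M_n;\Z)/\langle h_n^{-1}(L)\rangle$, i.e.\ the unbranched torsion is a quotient of the branched homology by the classes of the preimage components. The hypothesis that $\Delta_L$ does not vanish at nontrivial tuples of $p$-power roots of unity is used only to guarantee that each component $l$ of $L$ has \emph{connected} preimage in $M_n$ (via \cite[Prop 10.4]{TatenoUeki2024}); then a Seifert surface $\Sigma_l$ of $l$ pulls back to a $2$-chain with $\partial h_n^{-1}(\Sigma_l)=e\,h_n^{-1}(l)$, where the ramification index $e$ divides $p^{nd}$, so each class $[h_n^{-1}(l)]$ lies in the $p$-part of $H_1(M_n;\Z)$ and the non-$p$-parts of the two groups agree. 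If you prefer to keep your Dehn-filling formulation, you still owe an argument of this strength: you must control both the index of the meridian subgroup $K_N$ in the free quotient of $H_1(E(L)_{\Gamma_N};\Z)$ and its intersection with the torsion subgroup, and ``a character-by-character analysis along the lines of \cref{porti}'' is a plan, not a proof. As written, the proposal reproduces the easy outer layers of the paper's argument and leaves its core unproved.
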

\begin{proof}
    Let $\Gamma_n=(p^n\Z_p)^d$ and $M_n=M_{\Gamma_n}$ for each positive integer $n$.
    Since $h(M,L)=\lim_{n\to\infty}|H_1(M_n,\Z)|_{\text{non-}p}$ it suffices to show that
    \begin{equation*}
        \lim_{n\to\infty}|H_1(M_n,\Z)_{\text{non-}p}|=t_2^{[p]}(E(L),\partial E(L);\phi,\Z[1/p]).
    \end{equation*}
    Set $E(L)_n=\widetilde{E(L)}/\phi^{-1}(\Gamma_n)$.
    In this setting, Kionke~\cite[Theorem 5.11]{Kionke2020} show that
    \begin{align*}
        \lim_{n\to \infty}|H^2(E(L)_n,\partial E(L)_n;\Z[1/p])|=t_2^{[p]}(E(L),\partial E(L);\phi ,\Z[1/p]).
    \end{align*}
    We note that $|\text{tors}H^2(E(L)_n,\partial E(L)_n;\Z[1/p])|=|\text{tors}H^2(E(L)_n,\partial E(L)_n;\Z)_{\text{non-}p}|$.
    By considering the Lefschetz duality, we have
    \begin{align*}
        H^2(E(L)_n,\partial E(L)_n;\Z)\cong H_1(E(L)_n;\Z).
    \end{align*}
    Let $h_n:M_n \to M$ be the covering map.
    We note that the exterior of $h_n^{-1}(L)$ in $M_n$ is homeomorphic to $E(L)_n$.
    We claim that $|\text{tors}H_1(E(L);\Z)_{\text{non-}p}|=|H_1(M_n;\Z)_{\text{non-}p}|$.
    By using \cite[Theorem 2.1]{HartleyMurasugi1978} we have
    \begin{equation*}
        \text{tors}H_1(E(L);\Z)\cong H_1(M_n;\Z)/\langle h_n^{-1}(L) \rangle,
    \end{equation*}
    where $\langle h_n^{-1}(L) \rangle$ is a submodule of $H_1(M_n;\Z)$ which is generated by the components of $h_n^{-1}(L)$.
    We recall that $\mu_p=\{\zeta \in \overline{\Q}_p\mid \exists N\in\Z \text{ such that }\zeta^{p^N}=1\}$.
    By the assumption of $\Delta_L(\zeta_1,...,\zeta_d)\neq0$ for $(\zeta_1,...,\zeta_d)\in \mu_p^d\setminus \{(1,...,1)\}$, $L$ does not decompose in $M_n \to M$ (see \cite[Prop 10.4]{TatenoUeki2024}).
    For each component $l$ of $L$, since $[l]=0$ in $H_1(M;\Z)$, $l$ has the Seifert surface $\Sigma_l$ in $M$.
    Thus we have $h_n^{-1}(\Sigma_l)$ is a $2$-chain such that $\partial h_n^{-1}(\Sigma_l)=eh_n^{-1}(l)$ where $e$ denote the ramification index of $l$ in $M_n\to M$.
    Since $e$ divides $|G|=p^{nd}$ we see that $[h_n^{-1}(l)]$ is in the $p$-part of $H_1(M_n,\Z)$.
    Therefore, we have $\text{tors}H_1(E(L);\Z)_{\text{non-}p}\cong (H_1(M_n;\Z)/\langle h_n^{-1}(L) \rangle)_{\text{non-}p}$, and the assertion follows.
\end{proof}
\begin{remark}
    In the case where $L$ is a knot, that is studied in \cite{UekiYoshizaki-plimits}, the $p$-adic limit of the first homologies coincides with the $p$-adic torsion $t_2^{[p]}(E(L),\partial E(L);\phi,\Z[1/p])$ since $\Delta_L(1)=\pm 1$.
\end{remark}
\subsection{Twisted Whitehead links}
Let $k$ be an integer and $L_k$ denote the $k$-twisted Whitehead link (\cref{L_k}).
\begin{figure}[h]
    \centering
    \includegraphics[width=110mm]{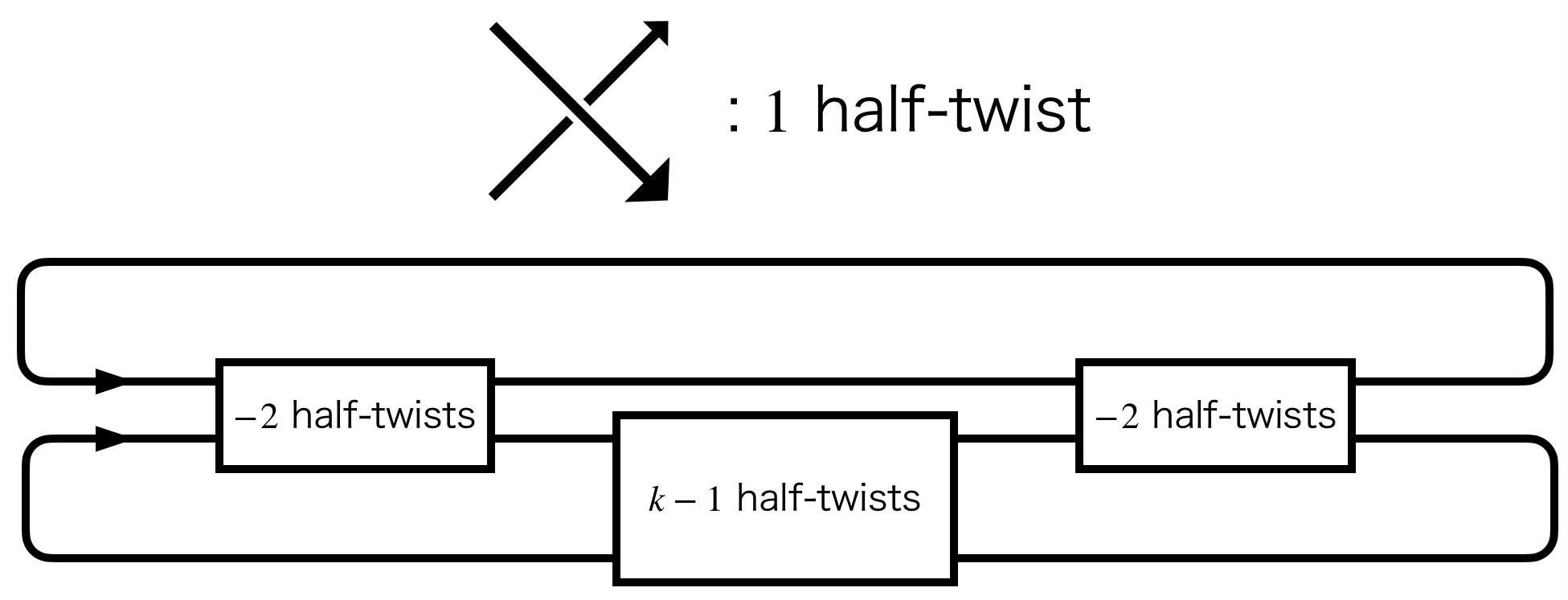}
    \caption{$L_k$}\label{L_k}
\end{figure}
Then we have the following.
\begin{example}\label{twistedWhiteheadLink}
    We have
    \begin{align*}
        &h(S^3,L_k)= \\
        &\begin{cases}
            \frac{m|m|_p}{\omega_p(m|m|_p)} & \text{if }k=2m, \\
            \frac{\omega_p(2)}{2} & \text{if }k=2m+1 \text{ and } p\neq2, \\
             (-1)^m\frac{\omega_2(m+1)-\omega_2(m)}{k}\prod_{\zeta \in \mu_2\setminus \{\pm1\}}2^{-\nu_{\zeta}}\log\frac{m\zeta+m+1}{m\zeta+m+\zeta} & \text{if }k=2m+1 \text{ and } p=2,
        \end{cases}
    \end{align*}
    where we set $\omega_2(m)=0$ for even integer $m$ and $\nu_{\zeta}=v_2(\log((m\zeta+m+1)/(m\zeta+m+\zeta)))$ for each $\zeta \in \mu_2$.
\end{example}
\begin{proof}
    Let $\Gamma_{n_1,n_2}=p^{n_1}\Z_p\oplus p^{n_2}\Z_p$ and $S^3_{n_1,n_2}=S^3_{\Gamma_{n_1,n_2}}$ for each positive integers $n_1$ and $n_2$.
    Then, by \cref{multipleRem}, we have $\lim_{n_1,n_2\to \infty}|H_1(S^3_{n_1,n_2};\Z)_{\text{non-}p}|=h(S^3,L_k)$.
    The Alexander polynomial of $L_k$ is of the form
    \begin{align*}
        \Delta_k(t_1,t_2)=
        \begin{cases}
            1+m-m(t_1+t_2)+(1+m)t_1t_2 & \text{if } k=2m+1, m\geq0, \\
            m(1+t_1t_2-t_1-t_2) & \text{if } k=2m, m\geq1,
        \end{cases}
    \end{align*}
    as in \cite{TatenoUeki2024}*{Proposition 12.2}.
    We note that the Alexander polynomial of each proper sublink $L' \subset L$ is trivial $\Delta_{L'}=1$.
    We also note that $\Delta_k(-1,-1)>0$ for any $k$.
    Thus we have
    \begin{align*}
        |H_1(S^3_{n_1,n_2};\Z)|=r'_{n_1,n_2}(\Delta_k)
    \end{align*}
    by the proof of \cref{convergenceHom} and \cref{sublink_sgn_rem}.
    Set $W(p^n)=\{\zeta \in \C \mid \zeta^{p^n}=1,\ \zeta\neq1\}$ for each prime number $p$ and positive integer $n$.
    
    Firstly, we show the case of $k=2m$.
    We have
    \begin{align*}
        r'_{n_1,n_2}(\Delta_k)&=\prod_{\zeta_1\in W(p^{n_1})}\prod_{\zeta_2 \in W(p^{n_2})}|m(1-\zeta_1)(1-\zeta_2)| \\
        &=\prod_{\zeta_1\in W(p^{n_1})}m^{p^{n_2}-1}|(1-\zeta_1)|^{p^{n_2}-1}\prod_{\zeta_2 \in W(p^{n_2})}|(1-\zeta_2)| \\
        &=m^{(p^{n_1}-1)(p^{n_2}-1)}\prod_{\zeta_1\in W(p^{n_1})}|(1-\zeta_1)|^{p^{n_2}-1}p^{n_2} \\
        &=m^{(p^{n_1}-1)(p^{n_2}-1)}p^{n_1(p^{n_2}-1)+n_2(p^{n_1}-1)}.
    \end{align*}
    Thus we see that $\lim_{n_1,n_2\to\infty} |H_1(S^3_{n_1,n_2};\Z)|=0$,
    and the limit of the non-$p$-parts is $\lim_{n\to\infty} |H_1(S^3_{n_1,n_2};\Z)_{\text{non-}p}|=\lim_{n_1,n_2\to \infty}(m|m|_p)^{p^{n_1+n_2}-p^{n_1}-p^{n_2}+1}=m|m|_p/\omega_p(m|m|_p)$.

    Secondly, we show the case of $k=2m+1$.
    We claim that
    \begin{equation}\label{oddCase}
        r'_{n_1,n_2}(\Delta_k)=\Res\left(\frac{t_1^{p^{n_1}}-1}{t_1-1},\frac{(1-mt_1+m)^{p^{n_2}}-(-1)^{p}(t_1+mt_1-m)^{p^{n_2}}}{t_1+1}\right).
    \end{equation}
    Indeed, set $f(t_1)=(1-mt_1+m)/(t_1+mt_1-m)$.
    Then we have
    \begin{align*}
        r'_{n_1,n_2}(\Delta_k)&=\Res\left(\frac{t_1^{p^{n_1}}-1}{t_1-1},(t_1+mt_1-m)^{p^{n_2}-1}\Res\left(\frac{t_2^{p^{n_2}}-1}{t_2-1},t_2+f(t_1) \right)\right) \\
        &=\Res\left(\frac{t_1^{p^{n_1}}-1}{t_1-1},(t_1+mt_1-m)^{p^{n_2}-1}\left(\frac{f(t_1)^{p^{n_2}}-(-1)^{p}}{f(t_1)+1}\right)\right) \\
        &=\Res\left(\frac{t_1^{p^{n_1}}-1}{t_1-1},\frac{(1-mt_1+m)^{p^{n_2}}-(-1)^{p}(t_1+mt_1-m)^{p^{n_2}}}{t_1+1}\right).
    \end{align*}
    Let $p$ be an odd prime number.
    Since $\Res(f, gh)=\Res(f,g)\Res(f,h)$ for $f,g,h\in \Z[t]$, we see that $r'_{n_1,n_2}(\Delta_k)$ is
    \begin{align}\label{fractionOfRes}
        \frac{\Res((t_1^{p^{n_1}}-1)/(t_1-1),(1-mt_1+m)^{p^{n_2}}+(t_1+mt_1-m)^{p^{n_2}})}{\Res((t_1^{p^{n_1}}-1)/(t_1-1),t_1+1)}.
    \end{align}
    Then the denominator of (\ref{fractionOfRes}) is $1$,
    and the numerator of (\ref{fractionOfRes}) is
    \begin{align*}
        &\Res\left(\frac{t_1^{p^{n_1}}-1}{t_1-1},(1-mt_1+m)^{p^{n_2}}+(t_1+mt_1-m)^{p^{n_2}}\right)\\
        &=\prod_{\zeta \in W(p^{n_1})}((1-m\zeta+m)^{p^{n_2}}+(\zeta+m\zeta-m)^{p^{n_2}}).
    \end{align*}
    Let $\zeta$ be a $p^n$-th roots of unity and set $\alpha=1+m(1-\zeta)$ and $\beta=\zeta - m(1-\zeta)$.
    We note that both $\alpha$ and $\beta$ are prime to $p$ since $1-\zeta$ is a prime element over $p$ in $\Q(\zeta)$ except for $\zeta=1$.
    Moreover we see that the Teichm\"{u}ller characters of $\alpha$ and $\beta$ are $\omega_p(\alpha)=\omega_p(\beta)=1$.
    Therefore, the limit value is
    \begin{align*}
        \lim_{n_1,n_2\to \infty}r'_{n_1,n_2}(\Delta_{k})&=\lim_{n_1\to \infty}\lim_{n_2\to \infty}\prod_{\zeta\in W(p^{n_1})}((1-m\zeta+m)^{p^{n_2}}+(\zeta+m\zeta-m)^{p^{n_2}}) \\
        &=\lim_{n_1\to\infty}\prod_{\zeta \in W(p^{n_1})}2 \\
        &=\lim_{n_1\to\infty}2^{p^{n_1}-1} \\
        &=\frac{\omega_p(2)}{2}.
    \end{align*}
    By the proof of \cref{multi_sgn_prop}, $r'_{n_1,n_2}(\Delta_k)>0$ for each $n_1,n_2>0$.
    Thus we have $\lim_{n_1,n_2\to \infty}|H_1(S^3_{n_1,n_2};\Z)|=\omega_p(2)/2$.

    Next we consider the case of $p=2$.
    Since $\Delta_k(1,1)=2$, \cref{multi_sgn_prop} yields that the $2$-adic limit of $r'_{n_1,n_2}(\Delta_k)$ is $0$.
    We calculate the $2$-adic limit of $r'_{n_1,n_2}(\Delta_k)_{\text{non-}2}$.
    By (\ref{oddCase}) we have
    \begin{align*}
        r'_{n_1,n_2}(\Delta_k)=\Res \left(\frac{t_1^{2^{n_1}}-1}{t_1-1},\frac{(1-mt_1+m)^{2^{n_2}}-(t_1+mt_1-m)^{2^{n_2}}}{t_1+1}\right).
    \end{align*}
    Put $F_{n_2}(t)=((1-mt+m)^{2^{n_2}}-(t+mt-m)^{2^{n_2}})/(t+1)$.
    Then the set of all roots of $F_{n_2}$ is
    \begin{align*}
        \left\{\frac{m\zeta+m+1}{m\zeta+m+\zeta}\mid \zeta^{2^{n_2}}=1,\ \zeta\neq-1\right\}.
    \end{align*}
    It is easy to check by induction that the leading term of $F_{n_2}$ is
    \begin{align*}
        \left(-\prod_{i=0}^{n_2-1}(m^{2^i}+(m+1)^{2^i})\right)t^{2^{n_2}-1}.
    \end{align*}
    Thus, by using (\ref{exchange_law}) we have
    \begin{align*}
        &r'_{n_1,n_2}(\Delta_k)=(-1)^{(2^{n_1}-1)(2^{n_2}-1)}\left(-\prod_{i=0}^{n_2-1}(m^{2^i}+(m+1)^{2^i})\right)^{2^{n_1}-1}2^{n_1}\prod_{\substack{F_{n_2}(\alpha)=0, \\ \alpha\neq 1}}\frac{\alpha^{2^{n_1}}-1}{\alpha-1} \\
        &=\left(\prod_{i=0}^{n_2-1}(m^{2^i}+(m+1)^{2^i})\right)^{2^{n_1}-1}2^{n_1}\prod_{\substack{\zeta \in W(2^{n_2}), \\ \zeta\neq -1}}\frac{\left(\frac{m\zeta+m+1}{m\zeta+m+\zeta}\right)^{2^{n_1}}-1}{\frac{m\zeta+m+1}{m\zeta+m+\zeta}-1} \\
        &=\left(\prod_{i=0}^{n_2-1}(m^{2^i}+(m+1)^{2^i})\right)^{2^{n_1}-1}2^{n_1}\prod_{\substack{\zeta \in W(2^{n_2}), \\ \zeta\neq -1}}\left((m\zeta+m+\zeta)\frac{\left(\frac{m\zeta+m+1}{m\zeta+m+\zeta}\right)^{2^{n_1}}-1}{1-\zeta}\right).
    \end{align*}
    Here we note that
    \begin{align*}
        \prod_{\substack{\zeta \in W(2^{n_2}), \\ \zeta\neq -1}}(1-\zeta)=2^{n_2-1}.
    \end{align*}
    Thus we have
    \begin{align*}
        r'_{n_1,n_2}(\Delta_k)=\underset{(1)}{\underline{\left(\prod_{i=0}^{n_2-1}(m^{2^i}+(m+1)^{2^i})\right)^{2^{n_1}-1}}}&2^{n_1-n_2+1}\underset{(2)}{\underline{\left(\prod_{\substack{\zeta \in W(2^{n_2}), \\ \zeta\neq -1}}m\zeta+m+\zeta\right)}} \\
        &\times\underset{(3)}{\underline{\left(\prod_{\substack{\zeta \in W(2^{n_2}), \\ \zeta\neq -1}}\left(\frac{m\zeta+m+1}{m\zeta+m+\zeta}\right)^{2^{n_1}}-1\right)}}
    \end{align*}
    We calculate the $2$-adic limits of non-$2$-parts of (1), (2) and (3).
    For (1), we set
    \begin{align*}
        G_{n_2}(m)=\prod_{i=0}^{n_2-1}(m^{2^i}+(m+1)^{2^i}).
    \end{align*}
    We claim that
    \begin{align*}
        \lim_{n_2\to\infty}G_{n_2}(m)=
        \begin{cases}
            1 & \text{if }2\mid m, \\
            -1 & \text{if }2\nmid m.
        \end{cases}
    \end{align*}
    To show the claim, we set
    \begin{align*}
        G_n^-(m)=\prod_{i=0}^{n-1}(-1)^{m+1}(m^{2^i}-(m+1)^{2^i}).
    \end{align*}
    for each $n>0$.
    By using Euler's theorem we see that $G^-_{n+1}(m)/G^-_{n}(m)\equiv  1 \mod 2^n$.
    Thus we obtain that $G_n^-(m)$ converges in $\Z_2$ for each $m$.
    We have
    \begin{align*}
        G_n(m)G_n^-(m)&=\prod_{i=0}^{n-1}(-1)^{m+1}(m^{2^{i+1}}-(m+1)^{2^{i+1}}) \\
        &=\frac{G^-_{n+1}(m)}{(-1)^{m+1}(m-(m+1))} \\
        &=(-1)^mG^-_{n+1}(m).
    \end{align*}
    Since $\lim_{n\to\infty}G_n^-(m)\neq 0$, the assertion of the claim follows.

    For (2), 
    we have
    \begin{align*}
        \prod_{\substack{\zeta \in W(2^{n_2}), \\ \zeta\neq -1}}(m\zeta+m+\zeta)&=\Res\left(\frac{t^{2^{n_2}}-1}{t^2-1},(m+1)t+m\right) \\
        &=\frac{m^{2^{n_2}}-(m+1)^{2^{n_2}}}{m^2-(m+1)^2}.
    \end{align*}
    Thus we have
    \begin{align*}
        \lim_{n_2\to\infty}\prod_{\substack{\zeta \in W(2^{n_2}), \\ \zeta\neq -1}}(m\zeta+m+\zeta)=\frac{\omega_2(m+1)-\omega_2(m)}{2m+1},
    \end{align*}
    where we set $\omega_2(n)=0$ for even integers $n$.

    For (3),
    By \cite{UekiYoshizaki-plimits}*{Lemma 5.6 (2)}, we have
    \begin{align*}
        \lim_{n\to\infty}\frac{\left(\frac{m\zeta+m+1}{m\zeta+m+\zeta}\right)^{2^n}-1}{2^n}=\log_2 \frac{m\zeta+m+1}{m\zeta+m+\zeta}.
    \end{align*}

    Combining the above calculations we have
    \begin{align*}
        &\lim_{n_1,n_2\to \infty}|H_1(S^3_{n_1,n_2};\Z)_{\text{non-}2}| \\
        &=(-1)^m\frac{\omega_2(m+1)-\omega_2(m)}{2m+1}\prod_{\zeta \in \mu_2\setminus \{\pm1\}}2^{-\nu_{\zeta}}\log \frac{m\zeta+m+1}{m\zeta+m+\zeta}.
    \end{align*}
\end{proof}

\begin{remark}
    By \cref{p_adic_torsion}, we see that
    \begin{equation*}
        h(S^3,L_k)=t_2^{[p]}(E(L_k),\partial E(L_k);\phi,\Z[1/p])
    \end{equation*}
    in the case of $k=2m+1$.
\end{remark}

We note that the similar calculation for the case of $k=2m$ is done in \cite{TatenoUeki2024}*{Example 13.1} to determine the $p$-parts of $|H_1(S^3_n)|$ where we set $S^3_n=S^3_{n,n}$.
For $2$-parts of the case of $k=2m+1$, by calculating as in the proof of \cref{IwasawaTypeFormula}, we have that
\begin{align*}
    |H_1(S^3_n;\Z)|_2^{-1}=2^{n2^n-2n+1+\sum_{\zeta^{2^n}=1, \zeta\neq \pm 1}\nu_{\zeta}}
\end{align*}
for sufficiently large $n$.
This is an example of \cite{TatenoUeki2024}*{Theorem 10.1 (1)} with $\mu=0$.

\section*{Acknowledgements}
The author would like to thank Jun Ueki and Sohei Tateno.
They gave the author great many helpful comments, which contributed to improving the results of this article.
The author also would like to thank Tomokazu Kashio who also gave the author helpful comments.

\bibliographystyle{jplain}
\bibliography{References}

\end{document}